\documentclass
[
    a4paper,
    DIV=11,
    abstracton,
    numbers=noendperiod
]
{scrartcl}

\usepackage
{
    graphicx,
    amssymb,
    amsmath,
    amsthm,
    dsfont, 
    xcolor,
    enumitem,
    tikz-cd, 
    mathtools,
    natbib,
    ifthen,
    enumitem,
    authblk,
    csquotes
}

\usepackage[bf,normal]{caption}

\usepackage[pdffitwindow=false,
            plainpages=false,
            pdfpagelabels=true,
            pdfpagemode=UseOutlines,
            pdfpagelayout=SinglePage,
            bookmarks=false,
            colorlinks=true,
            hyperfootnotes=false,
            linkcolor=blue,
            urlcolor=blue!30!black,
            citecolor=green!50!black]{hyperref}

\newtheorem{theorem}{Theorem}[section]
\newtheorem{corollary}[theorem]{Corollary}

\newtheorem{proposition}[theorem]{Proposition}
\newtheorem{definition}[theorem]{Definition}
\theoremstyle{definition}
\newtheorem{example}[theorem]{Example}
\newtheorem{remark}[theorem]{Remark}

\newcommand{\ts}{\hspace*{0.1em}} 

\newcommand{\subfiguretitle}[1]{{\scriptsize{#1}} \\[0.25ex]}
\newcommand{\R}{\mathbb{R}}                                     
\newcommand{\innerprod}[2]{\left\langle #1,\, #2 \right\rangle} 
\newcommand{\dd}{\mathrm{d}}                                    
\newcommand{\pp}[1]{\mathbb{#1}}                                
\providecommand{\abs}[1]{\left\lvert #1 \right\rvert}           
\providecommand{\norm}[1]{\left\lVert #1 \right\rVert}          

\newcommand\xqed[1]{\leavevmode\unskip\penalty9999 \hbox{}\nobreak\hfill \quad\hbox{#1}}
\newcommand{\exampleSymbol}{\xqed{$\blacktriangle$}} 

\newcommand{\inspace}{\mathbb{X}} 
\newcommand{\outspace}{\mathbb{Y}} 

\newcommand{\id}{I} 
\newcommand{\idop}{\mathcal{I}} 

\newcommand{\ebd}[1][]{
   \ifthenelse{\equal{#1}{}}{\mathcal{E}}{\mathcal{E}_{#1}}}

\newcommand{\cme}{\mathcal{U}_{\scriptscriptstyle Y \mid X}} 
\newcommand{\ecme}{\widehat{\mathcal{U}}_{\scriptscriptstyle Y \mid X}} 

\newcommand{\pf}[1][]{
   \ifthenelse{\equal{#1}{}}{\mathcal{P}}{\mathcal{P}_{#1}}}
\newcommand{\epf}[1][]{
   \ifthenelse{\equal{#1}{}}{\widehat{\mathcal{P}}}{\widehat{\mathcal{P}}_{#1}}}
\newcommand{\ko}[1][]{
   \ifthenelse{\equal{#1}{}}{\mathcal{K}}{\mathcal{K}_{#1}}}
\newcommand{\eko}[1][]{
   \ifthenelse{\equal{#1}{}}{\widehat{\mathcal{K}}}{\widehat{\mathcal{K}}_{#1}}}

\newcommand{\cov}[1][]{\mathcal{C}_\mathit{\scriptscriptstyle #1}} 
\newcommand{\ecov}[1][]{\widehat{\mathcal{C}}_\mathit{\scriptscriptstyle #1}} 
\newcommand{\gram}[1][]{G_\mathit{\scriptscriptstyle #1}} 


\DeclareMathOperator{\mspan}{span}

\bibpunct{(}{)}{,}{a}{,}{,}

\mathtoolsset{centercolon}

\allowdisplaybreaks

\setlength{\bibsep}{2pt plus 2pt}

\title{Eigendecompositions of Transfer Operators in \\ Reproducing Kernel Hilbert Spaces}

\author[1]{Stefan Klus}
\author[2]{Ingmar Schuster}
\author[3]{Krikamol Muandet}
\affil[1]{\normalsize Department of Mathematics and Computer Science, Freie Universit\"at Berlin, Germany}
\affil[2]{\normalsize Zalando Research, Zalando SE, Berlin, Germany}
\affil[3]{\normalsize Max Planck Institute for Intelligent Systems, T\"ubingen, Germany}
\date{}

\begin{document}

\maketitle

\begin{abstract}
Transfer operators such as the Perron--Frobenius or Koopman operator play an important role in the global analysis of complex dynamical systems. The eigenfunctions of these operators can be used to detect metastable sets, to project the dynamics onto the dominant slow processes, or to separate superimposed signals. We propose \emph{kernel transfer operators}, which extend transfer operator theory to reproducing kernel Hilbert spaces and show that these operators are related to Hilbert space representations of conditional distributions, known as conditional mean embeddings. The proposed numerical methods to compute empirical estimates of these kernel transfer operators subsume existing data-driven methods for the approximation of transfer operators such as extended dynamic mode decomposition and its variants. One main benefit of the presented kernel-based approaches is that they can be applied to any domain where a similarity measure given by a kernel is available. Furthermore, we provide elementary results on eigendecompositions of finite-rank RKHS operators. We illustrate the results with the aid of guiding examples and highlight potential applications in molecular dynamics as well as video and text data analysis.
\end{abstract}


\section{Introduction}

Transfer operators such as the Perron--Frobenius or Koopman operator are ubiquitous in molecular dynamics, fluid dynamics, atmospheric sciences, and control theory \citep{Schmid10, BBPK16, KNKWKSN18}. The eigenfunctions of these operators can be used to decompose a system given by an ergodic Markov process into fast and slow dynamics and to identify modes of the stationary measure, in the molecular dynamics context called metastable sets, corresponding to conformations of molecules. The methods presented in this paper can be applied to data generated by any \emph{nonlinear} dynamical system and we will show potential novel applications pertaining to video and text data analysis. In molecular dynamics, we are in particular interested in the slow conformational changes of molecules and the corresponding transition probabilities and transition paths.

Over the last decades, different numerical methods such as \emph{Ulam's method} \citep{Ulam60}, \emph{extended dynamic mode decomposition} (EDMD) \citep{WKR15, WRK15, KKS16}, the \emph{variational approach of conformation dynamics} (VAC) \citep{NoNu13, NKPMN14}, and several extensions and generalizations have been developed to approximate transfer operators and their eigenvalues and eigenfunctions. The advantage of purely data-driven methods is that they can be applied to simulation or measurement data. Hence, information about the underlying system itself is not required. An overview and comparison of such methods can be found in~\citet{KNKWKSN18}. Applications and variants of these methods are also described in \citet{RMBSH09, TRLBK14, MP15}. Kernel-based reformulations of the aforementioned methods have been proposed in~\citet{WRK15} and \citet{SP15}.

In this work, we construct representations of transfer operators using reproducing kernel Hilbert space (RKHS) theory. We can directly express the kernel transfer operators in terms of covariance and cross-covariance operators in the RKHS. The benefits of kernel-based methods are twofold: First, the basis functions need not be defined explicitly, which thereby allows us to handle infinite-dimensional feature spaces. Second, the proposed methods can not only be applied to dynamical systems defined on Euclidean spaces, but also to systems defined on any domain that admits an appropriate kernel function such as images, graphs, or strings. We show that the kernel transfer operators are closely related to recently developed Hilbert space embeddings of probability distributions \citep{Berlinet04:RKHS, Smola07Hilbert, MFSS16}. Moreover, we propose an eigendecomposition technique for finite-rank operators acting on an RKHS. We show that the eigenfunctions belong to the RKHS associated with the kernel and can be expressed entirely in terms of the eigenvectors and eigenvalues of Gram matrices defined for training data. Therefore, our technique resembles several existing kernel-based component analysis techniques in machine learning. For example, kernel principal component analysis (KPCA) and kernel canonical correlation analysis (KCCA) extend the well-known PCA and CCA to data mapped into an RKHS \citep{Scholkopf98:KPCA,Bach03:KICA,Fukumizu07:KCCA}. In fact, KPCA can be seen the application of our eigendecomposition results to a particular RKHS operator.

Our work provides a unified framework for approximating transfer operators and their eigenfunctions. Given that dynamical systems are ubiquitous in machine learning, this leads to novel applications such as visualization of high-dimensional dynamics, dimension reduction, source separation and denoising, data summarization, and clustering based on sequence information. The main contributions of this work are:
\begin{enumerate}[leftmargin=1.5em, itemsep=-0.5ex, topsep=0.5ex]
\item We derive kernel transfer operators (KTOs) and empirical estimators (Sections~\ref{ssec:Perron--Frobenius} and~\ref{ssec:Koopman}).  This includes operators on densities in the RKHS rather than mean-embedded measures.

\item We show that the embedded Perron--Frobenius operator is equivalent to the \emph{conditional mean embedding} (CME) formulation (Section~\ref{ssec:Perron--Frobenius}).

\item We propose an eigendecomposition algorithm for RKHS operators (Section~\ref{sec:eigendecomposition}) and show that existing methods for transfer operators are special cases (Section~\ref{ssec:Relationships with other methods}).

\item Lastly, we demonstrate the use of the KTOs in molecular dynamics as well as video, text, and  EEG data analysis (Section~\ref{sec:experiments}).
\end{enumerate}

The remainder of this paper is organized as follows: In Section~\ref{sec:Preliminaries}, we introduce reproducing kernel Hilbert spaces and transfer operators, followed by the kernel formulation of transfer operators in Section~\ref{sec:kernel-transfer-operators}. We demonstrate the proposed methods for the approximation of these operators in Section~\ref{sec:experiments} using several illustrative and real-world examples and conclude with a short summary and future work in Section~\ref{sec:conclusion}.

\section{Notation and Preliminaries}
\label{sec:Preliminaries}

In this section, we discuss various preliminary results necessary for the definition and analysis of kernel transfer operators. The notation and symbols used throughout the manuscript are summarized in Table~\ref{tab:Notation}. For the dynamical systems applications considered below, $ X $ corresponds to the state of the system at time $ t $ and $ Y $ to the state of the system at time $ t + \tau $, where $ \tau $ is a fixed lag time. This will be discussed in detail in Section~\ref{ssec:Transfer Operators}.

\subsection{Reproducing Kernel Hilbert Spaces}

We will first introduce reproducing kernel Hilbert spaces as well as Hilbert space embeddings of probability distributions. See, e.g., \citet{Schoe01, Berlinet04:RKHS,Steinwart2008:SVM} for further details.

\begin{table}[tb]
    \centering
    \caption{The notation and symbols.}
    \begin{tabular}{l@{\hspace{3em}}c@{\hspace{3em}}c}
        \hline
        Random variable & $ X $                     & $ Y $               \\
        Domain          & $ \inspace $              & $ \outspace $       \\
        Observation     & $ x $                     & $ y $               \\
        Kernel function & $ k(x, x^\prime) $        & $ l(y, y^\prime) $  \\
        Feature map     & $ \phi(x) $               & $ \psi(y) $         \\
        Feature matrix  & $ \Phi = [\phi(x_1), \dots, \phi(x_n)] $ & $ \Psi = [\psi(y_1), \dots, \psi(y_n)] $ \\
        Gram matrix     & $ \gram[XX] = \Phi^\top \Phi $           & $ \gram[YY] = \Psi^\top \Psi $           \\
        RKHS            & $ \mathbb{H} $            & $ \mathbb{G} $      \\
        \hline
    \end{tabular}
    \label{tab:Notation}
\end{table}

\begin{definition}[Reproducing kernel Hilbert space, \citep{Schoe01}] \label{def:RKHS}
Let $ \inspace $ be a set and $ \mathbb{H} $ a space of functions $ f \colon \inspace \to \R $. Then $ \mathbb{H} $ is called a \emph{reproducing kernel Hilbert space (RKHS)} with corresponding scalar product $ \innerprod{\cdot}{\cdot}_\mathbb{H} $ and induced norm $ \norm{f}_\mathbb{H} = \innerprod{f}{f}_\mathbb{H}^{1/2} $ if there is a function $ k \colon \inspace \times \inspace \to \R $ such that
\begin{enumerate}[label=(\roman*)]
\item $ \innerprod{f}{k(x, \cdot)}_\mathbb{H} = f(x) $ for all $ f \in \mathbb{H} $ and
\item $ \mathbb{H} = \overline{\mspan\{k(x, \cdot) \mid x \in \inspace \}} $.
\end{enumerate}
\end{definition}

The function $k$ is called a \emph{reproducing kernel} of $\mathbb{H}$. The first requirement, which is called a \emph{reproducing property} of $\mathbb{H}$, in particular implies $ \innerprod{k(x, \cdot)}{k(x^\prime, \cdot)}_\mathbb{H} = k(x, x^\prime) $ for all $ x, x' \in \inspace $. As a result, the function evaluation of $f$ at a given point $ x $ can be regarded as an inner product evaluation in $\mathbb{H}$ between the representer $k(x,\cdot)$ of $x$ and the function itself. Furthermore, we may treat $ k(x, \cdot) $ as a feature map $ \phi(x) $ of $x$ in $\mathbb{H}$ such that $ k(x, x^\prime) = \innerprod{\phi(x)}{\phi(x^\prime)}_\mathbb{H} $. 
Hence, the reproducing kernel $k$ is a kernel in a usual sense with $k(x,\cdot)$ as a \emph{canonical feature map} of $ x $. 
A function $k \colon \inspace\times\inspace\to\R$ is a reproducing kernel (with the aforementioned properties of $\mathbb{H}$) if and only if it is symmetric and \emph{positive definite}, i.e., $k(x,y) = k(y,x)$ and $\sum_{i,j=1}^n c_ic_j k(x_i,x_j) \geq 0$ for any $n\in\mathbb{N}$, any choice of $x_1,\ldots,x_n\in\inspace$, and any $c_1,\ldots,c_n\in\R$ (see \citealt[Chapter 4]{Steinwart2008:SVM}). For example, one of the most commonly used kernels is the Gaussian RBF kernel $k(x,y) = \exp(-\|x-y\|_2^2/2\sigma^2)$ for $x,y\in\inspace$ where $\sigma$ is a bandwidth parameter. More examples of kernels can be found in \citet{Schoe01,Berlinet04:RKHS,Hofmann2008}, and \citet{MFSS16}, for instance. We give an example of a well-known polynomial kernel below.

\begin{example} \label{ex:Kernel}
Let $ \inspace \subset \R^2 $. Consider the polynomial kernel $ k(x, x^\prime) = (1 + \innerprod{x}{x^\prime})^2 $. We could either use the \emph{canonical feature map} $ \phi_\text{can}(x) = k(x,\cdot) $ and the standard RKHS inner product satisfying the reproducing property, the features are then a subset of the function space $\mathbb{H}$, or the \emph{explicit feature map} $ \phi_\text{exp}(x) = [1, \,\sqrt{2}\ts x_1, \,\sqrt{2}\ts x_2, \,x_1^2,\, \sqrt{2} \ts x_1 \ts x_2, \, x_2^2]^\top $ with the standard Euclidean inner product, the features are then a subset of $ \R^6 $. \exampleSymbol
\end{example}

In most applications of kernels, all we need is the inner product between $\phi(x)$ and $\phi(x')$ in $\mathbb{H}$. The kernel trick allows us to evaluate it directly without constructing $\phi$ explicitly. In fact, some kernels such as the Gaussian kernel correspond to infinite-dimensional feature spaces. Most kernel-based learning algorithms rely on computations involving only Gram matrices evaluated on a finite number of data points. That is, the Gram matrix $G \in \R^{n\times n}$ on a data set $x_1,\ldots,x_n$ is given by $G_{ij} = k(x_i,x_j)$. As we will see later, although our transfer operators are defined in terms of $\phi$ and may live in an infinite-dimensional space, all associated operations can be carried out in terms of the finite-dimensional Gram matrices obtained from training data.

\subsection{Hilbert Space Embedding of Distributions}
\label{sec:marginal-mean-embedding}

We can extend the idea of feature maps defined by the kernel function to the space of probability distributions \citep{Berlinet04:RKHS,Smola07Hilbert,MFSS16}. A \emph{kernel mean embedding} provides a feature representation of distributions in RKHS associated with the kernel function.

\begin{definition}[Mean embedding] \label{def:marginal embedding}
Let $ \mathbb{M}_+^1(\inspace) $ be the space of all probability measures $ \pp{P} $ on $ \inspace $ and $\mathbb{H}$ an RKHS endowed with a measurable real-valued kernel $k \colon \inspace \times \inspace \rightarrow \R $ such that $\sup_{x\in\inspace} k(x,x) < \infty$. Then the \emph{kernel mean embedding} $ \mu_\pp{P} \in \mathbb{H} $ is defined by
\begin{equation*}
    \mu_\pp{P}
        := \mathbb{E}_{\scriptscriptstyle X}[\phi(X)]
        = \int \phi(x) \ts \dd \pp{P}(x)
        = \int k(x, \cdot) \ts \dd \pp{P}(x),
\end{equation*}
where $\mu_{\pp{P}}$ is a Bochner integral (see, e.g., \citet[Chapter 2]{Diestel-77} and \citet[Chapter 1]{Dinculeanu:2000} for the definition of the Bochner integral).
\end{definition}

In practice, we only have access to a $ \pp{P}(X) $-distributed  sample set $ \mathbb{D}_{\scriptscriptstyle X} = \{ x_1, \dots, x_n \} $. The empirical estimate of $\mu_{\pp{P}}$ can be computed as
\begin{equation*}
     \hat{\mu}_\pp{P}
        = \frac{1}{n} \sum_{i=1}^n \phi(x_i)
        = \frac{1}{n} \sum_{i=1}^n k(x_i, \cdot)
        = \frac{1}{n} \Phi \mathds{1},
\end{equation*}
where $ \Phi = [\phi(x_1), \dots, \phi(x_n)] $ is the feature matrix\footnote{Although this term is commonly used in the literature, $ \Phi $ is technically not a matrix, but a row vector in $ \mathbb{H}^n $. If $\mathbb{H}$ is finite-dimensional, $ \Phi $ can be viewed as a matrix.} and $ \mathds{1} = [1,\,\dots,\,1]^\top $ the vector of ones. By the reproducing property of $\mathbb{H}$, we have $\mathbb{E}_{X\sim\pp{P}}[f(X)] = \langle f,\mu_{\pp{P}} \rangle_{\mathbb{H}}$ and $\widehat{\mathbb{E}}_{X\sim\pp{P}}[f(X)] = \langle f,\hat{\mu}_{\pp{P}} \rangle_{\mathbb{H}}$ for all $f\in\mathbb{H}$.

Different choices of kernel functions result in different representations of the distribution~$\pp{P}$. In particular, the kernel mean embedding $\mu_{\pp{P}}$ fully characterizes $\pp{P}$ if $k$ is a \emph{characteristic kernel} \citep{Fukumizu04,Sriperumbudur08injectivehilbert}. In other words, we do not lose any information about $\pp{P}$ by embedding it into a characteristic RKHS. Examples of characteristic kernels include the Gaussian RBF kernel defined above and the Laplacian kernel $k(x,y) = \exp(-\|x-y\|_2/\sigma)$.

\begin{definition}[Integral operator]
\label{def:Integral operator}
Let $\inspace$ be a compact Hausdorff space, $\nu$ a finite Borel measure with support $\inspace$, and $k$ a continuous positive definite kernel on $\inspace$. An \emph{integral operator} $\ebd[k] \colon L_2(\inspace,\nu)\to L_2(\inspace,\nu)$ is defined by
\begin{equation*}
    (\ebd[k] f)(\cdot) := \int_{\inspace} k(x, \cdot) f(x) \ts \dd \nu(x).
\end{equation*}
\end{definition}  

In what follows, we consider $\nu$ to be the Lebesgue measure unless it is stated otherwise. We will sometimes omit the subscript if it is clear which kernel is meant. It was shown in \citet{Kato80} that if $ \int_\inspace \abs{k(x, y)} \dd x \le M_1 $, $ \int_\inspace \abs{k(x, y)} \dd y \le M_2 $, and $ f \in L^r(\inspace) $, with $ 1 \le r \le \infty $, then we obtain $ \norm{\ebd[k] f} \le \max(M_1, M_2) \norm{f} $ and the operator is bounded. Here, $ L^r(\inspace) $, with $ 1 \le r \le \infty $, denotes the spaces of $ r $-Lebesgue integrable functions. Since in our case $ k $ is symmetric, we obtain $ M_1 = M_2 $. In particular, if $ \inspace $ is compact and $ k(x, y) $ continuous in $ x $ and $ y $, this is satisfied. Whenever $ \pp{P} $ has a density $ p $, this means $ \mu_\pp{P} = \ebd[k] p $.

We can also generalize the idea of mean embedding of marginal distributions $\mathbb{P}(X)$ to conditional distributions $\mathbb{P}(Y\,|\,X)$. To this end, we first need to introduce the concept of covariance operators in Hilbert spaces~\citep{Baker70:XCov, Baker1973}.

\begin{definition}[Covariance operators]
Let $(X,Y)$ be a random variable on $ \inspace \times \outspace $ with corresponding marginal distributions $\pp{P}(X)$ and $\pp{P}(Y)$, respectively, and joint distribution $\pp{P}(X,Y)$. Let $\phi$ and $\psi$ be feature maps associated with the bounded kernels $k$ and $l$, respectively.\!\footnote{For specific choices of $\pp{P}(X)$ and $\pp{P}(Y)$, the boundedness assumption can be replaced by a more general integrability assumption, i.e., $\mathbb{E}_{\scriptscriptstyle X}[k(X,X)] < \infty$ and $\mathbb{E}_{\scriptscriptstyle Y}[l(Y,Y)] < \infty$, so that $\mathbb{H} \subset L^2(\inspace, \pp{P}(X))$ and $\mathbb{\mathbb{G}} \subset L^2(\outspace, \pp{P}(Y))$, respectively.} Let $\mathbb{H}$ and $\mathbb{G}$ be the RKHSs associated with the kernels $k$ and $l$, respectively. Then the \emph{covariance operator} $ \cov[XX] \colon \mathbb{H} \to \mathbb{H} $ and the \emph{cross-covariance operator} $ \cov[YX] \colon \mathbb{H} \to \mathbb{G} $ are defined as
\begin{alignat*}{4}
    \cov[XX] & := \int \phi(X) \otimes \phi(X) \ts \dd \pp{P}(X)
             &&= \mathbb{E}_{\scriptscriptstyle X}[\phi(X) \otimes \phi(X)], \\
    \cov[YX] & := \int \psi(Y) \otimes \phi(X) \ts \dd \pp{P}(Y,X)   
             &&= \mathbb{E}_{\scriptscriptstyle \mathit{YX}}[\psi(Y) \otimes \phi(X)].
\end{alignat*}
\end{definition}
 
\begin{remark}
Note that $ \psi(y) \otimes \phi(x) $ defines a rank-one operator from $ \mathbb{H} $ to $ \mathbb{G} $ via
\begin{equation*}
    \big(\psi(y) \otimes \phi(x)\big) f = \innerprod{\phi(x)}{f}_\mathbb{H} \psi(y) = f(x) \ts \psi(y)
\end{equation*}
so that $ \innerprod{\big(\psi(y) \otimes \phi(x)\big) f}{g}_\mathbb{G} = f(x) \innerprod{\psi(y)}{g}_\mathbb{G} = f(x) \ts g(y) $.
\end{remark}

The centered counterparts of $ \cov[XX] $ and $\cov[YX]$ are defined similarly using the mean-sub\-tracted feature maps $\phi_c(X) = \phi(X) - \mu_{\pp{P}(X)}$ and $\psi_c(Y) = \psi(Y) - \mu_{\pp{P}(Y)}$, where $\mu_{\pp{P}(X)} := \mathbb{E}_{\scriptscriptstyle X}[\phi(X)]$ and $\mu_{\pp{P}(Y)} := \mathbb{E}_{\scriptscriptstyle Y}[\psi(Y)]$. Intuitively, one may think of $ \cov[XX] $ and $\cov[YX]$ as a nonlinear generalization of covariance and cross-covariance matrices. We can express the cross-covariance of two functions $ f \in \mathbb{H} $ and $ g \in \mathbb{G} $ in terms of $ \cov[XY] $ and $ \cov[YX] $ as
\begin{equation} \label{eq:Cross-covariance}
    \mathbb{E}_{\scriptscriptstyle XY}[f(X) \ts g(Y)]
        = \innerprod{f}{\cov[XY] g}_\mathbb{H}
        = \innerprod{\cov[YX] f}{g}_\mathbb{G}.
\end{equation}
Hence, $\cov[XY]$ is the adjoint of $\cov[YX]$ and $\cov[XX]$ is a self-adjoint operator.

The following result, which relates $\cov[XX]$ and $\cov[XY]$, will be used to define the embedding of conditional distributions. We refer the readers to \citet{Fukumizu04} for the proof.

\begin{proposition} \label{thm:Fukumizu04}
If $ \ts \mathbb{E}_{\scriptscriptstyle Y \mid X}[g(Y) \mid X = \cdot \,] \in \mathbb{H} $ for all $ g \in \mathbb{G} $, $\cov[XX] \mathbb{E}_{\scriptscriptstyle Y \mid X}[g(Y) \mid X = \cdot \,] = \cov[XY] g$.
\end{proposition}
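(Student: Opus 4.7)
The plan is to test both sides of the claimed identity against an arbitrary $f \in \mathbb{H}$ and show the resulting scalar quantities agree, which suffices since $\mathbb{H}$ is a Hilbert space. I will use two tools repeatedly: the cross-covariance identity in equation~\eqref{eq:Cross-covariance} and its specialization to $\cov[XX]$ (taking $\outspace = \inspace$, $l = k$, $\psi = \phi$), and the standard tower property of conditional expectation.

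Write $h := \mathbb{E}_{\scriptscriptstyle Y \mid X}[g(Y) \mid X = \cdot \,] \in \mathbb{H}$; this membership is exactly what the hypothesis of the proposition provides and is what allows us to apply RKHS identities to $h$. For any $f \in \mathbb{H}$, the cross-covariance identity yields
\begin{equation*}
    \innerprod{f}{\cov[XY] g}_\mathbb{H} = \mathbb{E}_{\scriptscriptstyle XY}[f(X)\ts g(Y)],
\end{equation*}
while the same identity applied to $\cov[XX]$ (self-adjoint, acting on $h \in \mathbb{H}$) gives
\begin{equation*}
    \innerprod{f}{\cov[XX] h}_\mathbb{H} = \mathbb{E}_{\scriptscriptstyle X}[f(X)\ts h(X)].
\end{equation*}
By the tower property,
\begin{equation*}
    \mathbb{E}_{\scriptscriptstyle XY}[f(X)\ts g(Y)]
        = \mathbb{E}_{\scriptscriptstyle X}\!\bigl[ f(X) \ts \mathbb{E}_{\scriptscriptstyle Y\mid X}[g(Y)\mid X] \bigr]
        = \mathbb{E}_{\scriptscriptstyle X}[f(X)\ts h(X)],
\end{equation*}
so the two inner products coincide. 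Since $f \in \mathbb{H}$ was arbitrary, $\cov[XX] h = \cov[XY] g$, which is the claim.

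The only mildly delicate point, and the reason the hypothesis is phrased as it is, is the justification of the auto-covariance identity $\innerprod{f}{\cov[XX] h}_\mathbb{H} = \mathbb{E}_{\scriptscriptstyle X}[f(X) h(X)]$: this requires $h$ to be a bona fide element of $\mathbb{H}$ so that $\cov[XX] h$ is well defined as a Bochner integral and so that the reproducing property can be used pointwise. Without the assumption $\mathbb{E}_{\scriptscriptstyle Y \mid X}[g(Y) \mid X = \cdot\,] \in \mathbb{H}$, the conditional expectation is only an $L^2$-function and the displayed identity must be interpreted through a regression or limiting procedure; this is where \citet{Fukumizu04} must do real work, but under the stated hypothesis the argument reduces to the two-line inner-product computation above.
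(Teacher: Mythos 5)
Your proof is correct: the paper itself defers this statement to \citet{Fukumizu04}, but the argument it gives for the analogous Proposition~\ref{prop:transfer-rkhs} is exactly your computation (pair both sides with an arbitrary $f\in\mathbb{H}$, apply \eqref{eq:Cross-covariance}, and collapse the double integral via the tower property), so your approach is essentially the same as the paper's. Your closing remark correctly identifies where the hypothesis $\mathbb{E}_{\scriptscriptstyle Y\mid X}[g(Y)\mid X=\cdot\,]\in\mathbb{H}$ is actually used.
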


In general, the covariance operator and cross-covariance operator cannot be computed directly since the joint distribution $ \pp{P}(X, Y) $ is typically not known. We can, however, estimate it from sampled data. Given $ n $ pairs of training data $ \mathbb{D}_{\scriptscriptstyle XY} = \{(x_1, y_1), \dots, (x_n, y_n)\} $ drawn i.i.d.\ from the probability distribution $ \pp{P}(X, Y) $, we define the feature matrices
\begin{equation*}
    \Phi = \begin{bmatrix} \phi(x_1) & \dots & \phi(x_n) \end{bmatrix}
    \quad \text{and} \quad
    \Psi = \begin{bmatrix} \psi(y_1) & \dots & \psi(y_n) \end{bmatrix}.
\end{equation*}
The corresponding Gram matrices are given by $ \gram[XX] = \Phi^\top \Phi $ and $ \gram[YY] = \Psi^\top \Psi $. Then, the empirical estimates of $ \cov[XX] $ and $\cov[YX]$ are given by
\begin{equation*}
    \ecov[XX] = \frac{1}{n} \sum_{i=1}^n \phi(x_i) \otimes \phi(x_i)
              = \frac{1}{n} \Phi \Phi^\top
    \quad \text{and} \quad
    \ecov[YX] = \frac{1}{n} \sum_{i=1}^n \psi(y_i)\otimes\phi(x_i)
              = \frac{1}{n} \Psi \Phi^\top.
\end{equation*}
Analogously, the mean-subtracted counterparts of $\ecov[XX]$ and $\ecov[YX]$ can be obtained as $ \frac{1}{n} \Phi H \Phi^\top $ and $ \frac{1}{n} \Psi H \Phi^\top $, where $ H $ is the centering matrix given by $ H = \id_n - \frac{1}{n} \mathds{1}_n \mathds{1}_n^\top $. Note that if both $ k $ and $ l $ are linear kernels for which $\phi$ and $\psi$ are identity maps, we obtain covariance and cross-covariance matrices as a special case.


We are now in a position to introduce the Hilbert space embedding of conditional distributions, commonly known as \emph{conditional mean embedding}. Interested readers should consult \citet{SHSF09, Song2013, MFSS16} for further details on this topic.

\begin{definition}[Conditional mean embedding, \citealt{SHSF09}] \label{def:cme}
Let $ \cov[XX] $ be the covariance operator on $\mathbb{H}$ and $ \cov[YX] $  be the cross-covariance operator from $\mathbb{H}$ to $\mathbb{G}$, respectively. Assume that Proposition \ref{thm:Fukumizu04} holds. Then the conditional mean embedding of $\pp{P}(Y \mid X)$ is an operator mapping from $\mathbb{H}$ to $\mathbb{G}$ and is given by $ \cme := \cov[YX] \ts \cov[XX]^{-1} $.
\end{definition}

\noindent Under the assumption that $ \mathbb{E}_{\scriptscriptstyle Y \mid X}[g(Y) \mid X = \cdot \,] \in \mathbb{H} $ for all $ g \in \mathbb{G} $, it follows from the reproducing property of $\mathbb{H}$ and Proposition \ref{thm:Fukumizu04} that
\begin{equation*}
    \mathbb{E}_{\scriptscriptstyle Y \mid x}[g(Y) \mid X = x]
        = \innerprod{\cov[XX]^{-1} \ts \cov[XY] \ts g}{k(x,\cdot)}_{\mathbb{H}}
        = \innerprod{g}{\cov[YX] \ts \cov[XX]^{-1} \ts k(x,\cdot)}_{\mathbb{G}}
\end{equation*}
for all $ g \in \mathbb{G} $. That is, we can treat $\mu_{\scriptscriptstyle Y \mid x} := \cme k(x,\cdot) = \cov[YX] \ts \cov[XX]^{-1} \ts k(x, \cdot)$ as the conditional mean embedding of $\pp{P}(Y \mid X = x)$ in $\mathbb{G}$. 

\begin{remark}
As noted by \citet{Fukumizu13:KBR}, the assumption that $ \/ \mathbb{E}_{\scriptscriptstyle Y \mid X}[g(Y) \mid X = \cdot \,] \in \mathbb{H} $ for all $ g \in \mathbb{G} $ may not hold in general. Hence, the expression $\cov[YX] \ts \cov[XX]^{-1} \ts k(x, \cdot)$ is used as an approximation of the conditional mean $\mu_{\scriptscriptstyle Y \mid x}$. A common approach to alleviate this problem is to consider the regularized inverse $(\cov[XX] + \varepsilon \ts \idop)^{-1}$, where $\varepsilon > 0$ is a regularization parameter and $ \idop $ is the identity operator in $\mathbb{H}$. The empirical estimator of the conditional mean embedding is then given by
\begin{equation*}
    \ecme = \ecov[YX] (\ecov[XX] + \varepsilon \ts \idop)^{-1}
          = \Psi (\gram[XX] + n \ts \varepsilon \ts \id_n)^{-1} \Phi^\top.
\end{equation*}
\citet{Fukumizu13:KBR} establishes the consistency and convergence rate of this estimator under appropriate assumptions. Throughout this work, we consider $\cme := \cov[YX] \ts (\cov[XX] + \varepsilon\mathcal{I})^{-1} $.
\end{remark}

\subsection{Transfer Operators}
\label{ssec:Transfer Operators}

We now give a brief introduction to transfer operators and their applications. A detailed exposition on this topic can be found in~\citet{KNKWKSN18}. Conditions under which these operators exist are described in \citet{LaMa94}, summarized also in \citet{KKS16}. In what follows, we assume that the operators are compact and have a discrete spectrum, see \citet{Sch99, Huisinga01} for details. Let $ \{ X_t \}_{t \ge 0} $ be a stationary and ergodic Markov process defined on the state space $ \inspace \subset \R^d $. Then the \emph{transition density function} $ p_\tau $ is defined by 
\begin{equation*}
    \pp{P}[X_{t + \tau} \in \mathbb{A} \mid X_t = x] = \intop_{\mathbb{A}} p_\tau(y \mid x) \, \dd y,
\end{equation*} where $ \mathbb{A} $ is any measurable set. That is, $ p_\tau(y \mid x) $ is the conditional probability density of $ X_{t + \tau} = y $ given that $ X_t = x $, and by the stationarity and Markov assumptions this captures all information about the stochastic process at discretization level $\tau$.

\begin{definition}[Transfer operators] \label{def:Transfer operators}
Let $ p_t \in L^1(\inspace) $ be a probability density and $ f_t \in L^\infty(\inspace)$ an observable\footnote{An observable could be, for example, a measurement or sensor probe.} of the system. For a given lag time $ \tau $:
\begin{enumerate}[label=(\roman*), itemsep=0ex, topsep=1ex]
\item The \emph{Perron--Frobenius operator} $ \pf \colon L^1(\inspace) \to L^1(\inspace) $ is defined by
\begin{equation*}
    \left(\pf p_t \right)(y) = \int p_\tau(y \mid x) \ts p_t(x) \ts \dd x.
\end{equation*}
\item The \emph{Koopman operator} $\ko \colon L^{\infty}(\inspace) \to L^{\infty}(\inspace) $
is defined by
\begin{equation*}
   \left(\ko f_t\right)(x)
        = \int p_\tau(y \mid x) \ts f_t(y) \ts \dd y
        = \mathbb{E}[f_t(X_{t + \tau}) \mid X_t = x].
\end{equation*}
\end{enumerate}
\end{definition}

Note that the operators and the corresponding eigenvalues implicitly depend on the lag time $ \tau $. A density $ \pi $ that is invariant under the action of $ \pf $ is called \emph{invariant, equilibrium} or \emph{stationary density}. That is, it holds that $ \pf \pi = \pi $. For the following definition, we assume that there is a unique invariant density $ \pi > 0 $, which for molecular dynamics problems is given by the Boltzmann distribution $ \pi \sim \exp(-\beta V)$, where $ \beta $ is the inverse temperature and $ V $ the potential of the system \citep{SS13}.

\begin{definition}[Transfer operators cont'd]
Let $ u_t(x) = \pi(x)^{-1} \ts p_t(x)$ be a probability density with respect to the equilibrium density $ \pi $.
\begin{enumerate}[label=(\roman*), itemsep=0ex, topsep=1ex] \setcounter{enumi}{2}
\item The \emph{Perron--Frobenius operator with respect to the equilibrium density}, denoted by~$ \mathcal{T} $, is defined as
\begin{equation*}
   \left(\mathcal{T} u_t\right)(y) = \frac{1}{\pi(y)} \int p_\tau(y \mid x) \ts \pi(x) \ts u_t(x) \ts \dd x.
\end{equation*}
\end{enumerate}
\end{definition}

Under certain conditions, the transfer operators can be defined on other spaces~$ L^r $ and $ L^{r'} $, with~$ r \ne 1 $ and~$ r' \ne \infty $, see \citet{BaRo95, KKS16}. The operators $ \pf $ and $\ko $ are adjoint to each other with respect to $ \innerprod{\cdot}{\cdot} $, defined by $ \innerprod{f}{g} = \int_{\inspace}f(x) \ts g(x) \ts \dd x $, while $ \mathcal{T} $ and $ \ko $ are adjoint with respect to $ \innerprod{\cdot}{\cdot}_\pi $, defined by $ \innerprod{f}{g}_\pi = \int_\inspace f(x) \ts g(x) \ts \pi(x) \ts \dd x$ for $f\in L_\pi^r(\inspace)$ and $g\in L_\pi^{r'}(\inspace)$ where $\frac{1}{r} + \frac{1}{r'} = 1$. That is, we have $\innerprod{\ko f}{g}_\pi = \innerprod{f}{\mathcal{T} g}_\pi$.

\begin{definition}[Reversibility]
A system is called reversible if the \emph{detailed balance condition} $ \pi(x) \ts p_\tau(y \mid x) = \pi(y) \ts p_\tau(x \mid y) $ holds for all $ x, y \in \inspace $.
\end{definition}

If the system is reversible, then $ \ko = \mathcal{T} $. Moreover, the operators' eigenvalues $ \lambda_\ell $ are real and the eigenfunctions $ \varphi_{\ell} $ form an orthogonal basis with respect to the corresponding scalar product. As a result, the eigenvalues can be sorted in descending order so that $ 1 = \lambda_1 > \lambda_2 \ge \lambda_3 \ge \dots $. See~\citet{NoNu13, NKPMN14, KNKWKSN18} for more details.

In what follows, we will use the Ornstein--Uhlenbeck process, which is reversible and whose eigenvalues and eigenfunctions can be computed analytically, as a guiding example.

\begin{example} \label{ex:Ornstein-Uhlenbeck}
A one-dimensional Ornstein--Uhlen\-beck process is defined by the stochastic differential equation $ \dd X_t = -\alpha D X_{t} \ts \dd t + \sqrt{2D} \ts \dd W_t $, where $ \alpha $ is the friction coefficient, $ D = \beta^{-1} $ the diffusion coefficient, and $ \{W_{t}\}_{t \ge 0} $ a one-dimensional standard Wiener process.
The solution is given by
\begin{equation*}
    X_t = X_0 \ts e^{-\alpha D t} + \sqrt{2D} \int_0^t e^{-\alpha D(t-s)} \dd W_s,
\end{equation*}
which is a Gaussian process with time marginal $ X_t \sim \mathcal{N}\big(X_0 \ts e^{-\alpha D t}, \frac{1}{\alpha} (1 - e^{-2 \alpha D t})\big) $.
\exampleSymbol
\end{example}

\begin{remark}
Definition~\ref{def:Transfer operators} introduces the stochastic Koopman operator. For a deterministic dynamical system of the form $ \dot{x} = F(x) $, we obtain $ p_\tau(y \mid x) = \delta_{\boldsymbol{\Phi}_\tau(x)}(y) $, where $ \boldsymbol{\Phi}_\tau $ denotes the flow map mapping $x(t)$ to $x(t+\tau)$ and $ \delta_x $ the Dirac distribution centered in $ x $. Thus, $\ko f = f \circ \boldsymbol{\Phi}_\tau $. For a discrete dynamical system of the form $ x_{i+1} = F(x_i) $, we obtain $ \mathcal{K} f = f \circ F $. In the same way, the Perron--Frobenius operator can be defined for deterministic systems, see, e.g., \citet{LaMa94, KKS16}.
\end{remark}

\begin{example} \label{ex:Simple system}
Consider the discrete dynamical system $ F \colon \R^2 \to \R^2 $, taken from \citet{TRLBK14}, with
\begin{equation*}
    \begin{bmatrix}
        x_1 \\
        x_2
    \end{bmatrix}
    \mapsto
    \begin{bmatrix}
        a \ts x_1 \\
        b \ts x_2 + (b - a^2) \ts x_1^2 \ts
    \end{bmatrix}.
\end{equation*}
For the numerical experiments, we set $ a = 0.8 $ and $ b = 0.7 $. The eigenvalues of the Koopman operator associated with the system are $ \lambda_1 = 1 $, $ \lambda_2 = a $, and $ \lambda_3 = b $ with corresponding eigenfunctions $ \varphi_1(x) = 1 $, $ \varphi_2(x) = x_1 $, and $ \varphi_3(x) = x_2 + x_1^2 $. Furthermore, products of eigenfunctions are again eigenfunctions, for instance, $ \varphi_4(x) = \varphi_2(x)^2 = x_1^2$ with eigenvalue $ \lambda_4 = \lambda_2^2 = a^2 $. \exampleSymbol
\end{example}

Given the eigenvalues and eigenfunctions of the Koopman operator, we can predict the evolution of the dynamical system. To this end, let $ g(x) = x $ be the \emph{full-state observable}. We then write $ g(x) $ in terms of the eigenfunctions as $ g(x) = x = \sum_{\ell} \varphi_{\ell}(x) \ts \eta_{\ell} $ where the vectors $ \eta_\ell $ are called \emph{Koopman modes}. Defining the Koopman operator to act componentwise for vector-valued functions, we obtain
\begin{equation*}
    \left(\ko g\right)(x)
        = \mathbb{E}[g(X_{\tau}) \mid X_{0}=x]
        = \sum_{\ell}\lambda_{\ell}(\tau) \ts \varphi_{\ell}(x) \ts \eta_{\ell}.
\end{equation*}

\begin{example}
For the simple deterministic system introduced in Example~\ref{ex:Simple system}, we obtain the Koopman modes $ \eta_1 = [0, \, 0]^\top $, $ \eta_2 = [1, \, 0]^\top $, $ \eta_3 = [0, \, 1]^\top $, and $ \eta_4 = [0, \, -1]^\top $ so that $ g(x) = \sum_{\ell=1}^4 \varphi_\ell(x) \ts \eta_\ell $ and $ \mathcal{K} g(x) = \sum_{\ell=1}^4 \lambda_\ell \ts \varphi_\ell(x) \ts \eta_\ell = F(x) $.
\end{example}

The above example illustrates that with the aid of the Koopman decomposition into eigenvalues, eigenfunctions, and modes, we can now evaluate the dynamical system at any data point. This is particularly useful if the system is not known explicitly.

\section{Kernel Transfer Operators and Their Eigendecompositions}
\label{sec:kernel-transfer-operators}

We now express the transfer operators introduced above in terms of the covariance and cross-covariance operators defined on some RKHS $\mathbb{H}$. Since $X_t$ and $X_{t+\tau}$ always live in the same state space, we assume that the input and output spaces and thus also the kernels and resulting Hilbert spaces are identical, i.e., $\inspace = \outspace$, $k = l$, and $\mathbb{H} = \mathbb{G}$. In addition to the standard transfer operators, we will derive transfer operators for embedded densities and observables in the RKHS $\mathbb{H}$ and analyze the relationships between them. To this end, we define---similar to the standard Gram matrices $ \gram[XX] $ and $ \gram[YY] $---the \emph{time-lagged Gram matrices} $ \gram[XY] = \Phi^\top \Psi $ and $ \gram[YX] = \Psi^\top \Phi $. For the RKHS community, our result on the kernel Perron--Frobenius operator is of particular interest, as it describes propagation of densities directly in the RKHS, unlike the conditional mean embedding which does so only in embedded form.

\subsection{Main Results}

Overall, we derive four different kernel transfer operators that can be written in terms of the covariance and cross-covariance operators as summarized in Table~\ref{tab:RKHS transfer operators}. The kernel Perron--Frobenius operator maps densities $ p_t $ to densities $ p_{t + \tau} $, the kernel Koopman operator maps an observable function $ f $ to its expected value function $ \mathbb{E}[f(X_{t + \tau}) \mid X_t = \cdot\,] $ under the assumption that the densities and observables (and the densities and observables pushed forward) are in $ \mathbb{H} $. This basically means that we assume that the RKHS $ \mathbb{H} $ is an invariant subspace of the respective operator, which is a strong assumption. Depending on the kernel, the feature space might be low-dimensional (e.g., polynomial kernel), but could also be infinite-dimensional (e.g., Gaussian kernel). While the invariance might be satisfied for simple drift-diffusion processes, it will in general not be satisfied for more complex chaotic systems, and the class of dynamical systems for which this holds is (to our knowledge) unknown. The embedded operators do not assume densities or observables to be in $ \mathbb{H} $ and rather push forward kernel mean embeddings (see Definition~\ref{def:marginal embedding}) or embedded observable functions (see Definition~\ref{def:Integral operator}), respectively. We prove that these estimates converge at a rate of order $O_p\left(n^{-1/2}\varepsilon^{-1}\right)$ in operator norm, where $\varepsilon$ is a regularization parameter.

\begin{table}[tbh]
    \centering
    \caption{Overview of kernel transfer operators where $A := \gram[XY]^{-1} \ts (\gram[XX] + n \varepsilon \id)^{-1} \ts \gram[XY] $.}
    \setlength\tabcolsep{2ex}
    \renewcommand{\arraystretch}{1.5}
    \begin{tabular}{lll}
        \hline
        & \multicolumn{1}{c}{Perron--Frobenius}
        & \multicolumn{1}{c}{Koopman} \\ \hline
        Kernel operator
        & $ \begin{array}{r@{\;}c@{\;}l} \pf[k] &=& (\cov[XX] + \varepsilon \idop)^{-1} \cov[YX] \\[-1ex]
        &\approx & \Psi \ts A \ts \Phi^\top \end{array} $
        & $ \begin{array}{r@{\;}c@{\;}l} \ko[k] &=& (\cov[XX] + \varepsilon \idop)^{-1} \cov[XY] \\[-1ex]
        &\approx& \Phi \ts (\gram[XX] + n \varepsilon \id)^{-1} \ts \Psi^\top \end{array} $ \\ \hline
        Embedded operator
        & $ \begin{array}{r@{\;}c@{\;}l} \pf[\ebd] &=& \cov[YX] (\cov[XX] + \varepsilon \idop)^{-1} \\[-1ex]
        &\approx&  \Psi \ts (\gram[XX] + n \varepsilon \id)^{-1} \ts \Phi^\top \end{array} $
        & $ \begin{array}{r@{\;}c@{\;}l} \ko[\ebd] &=& \cov[XY] (\cov[XX] + \varepsilon \idop)^{-1} \\[-1ex]
        &\approx& \Phi \ts A^\top \ts \Psi^\top \end{array} $ \\
        \hline
    \end{tabular}
    \label{tab:RKHS transfer operators}
\end{table}

Moreover, we provide a method for computing the eigendecomposition of general finite--rank RKHS operators by solving a real-valued surrogate problem based on the Gram matrices. While this result is important by itself, it results in the eigendecompositions given in Table~\ref{tab:TO eigendecomposition} for the kernel transfer operators derived in this paper. The detailed derivations of our main results can be found in the sections below.

\begin{table}[htb]
    \centering
    \caption{Surrogate real eigenproblem and resulting operator eigenfunction.}
    \setlength\tabcolsep{2ex}
    \renewcommand{\arraystretch}{1.5}
    \begin{tabular}{lll}
        \hline
        & \multicolumn{1}{c}{Perron--Frobenius} & \multicolumn{1}{c}{Koopman} \\
        \hline
        Kernel operator & $(\gram[XX]+n\varepsilon \id)^{-1}\gram[XY] \ts \mathbf{v} = \lambda \ts \mathbf{v}$ & $ (\gram[XX] + n \varepsilon \id)^{-1}\gram[YX] \ts \mathbf{v} = \lambda \ts \mathbf{v}$ \\
        Embedded operator & $\gram[XY] (\gram[XX] + n \varepsilon \id)^{-1} \ts \mathbf{v} = \lambda \ts \mathbf{v}$ & $ \gram[YX] (\gram[XX] + n \varepsilon \id)^{-1} \ts \mathbf{v} = \lambda \ts \mathbf{v}$ \\
        Eigenfunction & $ \varphi = \Phi \ts \gram[XX]^{-1} \ts \mathbf{v}$ & $ \varphi = \Phi \ts \mathbf{v} $ \\
        \hline
    \end{tabular}
    \label{tab:TO eigendecomposition}
\end{table}

\subsection{Eigendecomposition of RKHS Operators}
\label{sec:eigendecomposition}

If the feature space is finite-dimensional and known explicitly, we can compute eigenfunctions directly as we will show in Example~\ref{ex:EDMD simple system}. The advantage of that approach is that the matrix size does not depend on the number of test points $ n $. As $ n \to \infty $, this approach converges to a Galerkin approximation of the respective operator \citep{KKS16}. Now, we want to consider also the cases where the dimension of the feature space is larger than the number of test points or where the feature space is even infinite-dimensional. Let $ \mathcal{S} = \Upsilon B \ts \Gamma^\top $ be a Hilbert--Schmidt operator mapping from $ \mathbb{H} $ to itself, with $ \Upsilon, \Gamma$ row vectors in $\mathbb{H}^n$, and $ B \in \R^{n \times n} $ for some~$ n $. Assume, furthermore, that $ \Upsilon$ and $ \Gamma$ contain linearly independent elements, which is for instance the case if we use a Gaussian kernel, choose the observations $ z_1, \dots, z_n $ as well as $ z'_1, \dots, z'_n $ to be pairwise different, and define $ \Upsilon = [k(z_1,\cdot), \dots, k(z_n,\cdot)] $, $\Gamma= [k(z'_1,\cdot), \dots, k(z'_n,\cdot)] $. Then the eigenvalues and eigenfunctions of $ \mathcal{S} $ can be computed from eigenvalues and eigenvectors of $ \gram[\Gamma\Upsilon] \ts B$ or $B \ts \gram[\Gamma\Upsilon] $, where $ \gram[\Gamma\Upsilon] = \Gamma^\top \Upsilon$.

\begin{proposition} \label{prop:eigenfunction-explicit}
	The Hilbert--Schmidt operator $ \mathcal{S} = \Upsilon B \Gamma^\top $ has an eigenvalue $\lambda \ne 0$ with corresponding eigenfunction $ \upsilon = \Upsilon \mathbf{v} $ if and only if $ \mathbf{v} $ is an eigenvector of $ B \ts \gram[\Gamma\Upsilon] $ associated with $ \lambda $.
	Similarly, $\mathcal{S} $ has an eigenvalue $\lambda \ne 0$ with corresponding eigenfunction $ \gamma = \Gamma \ts \gram[\Gamma\Gamma]^{-1} \ts \mathbf{v} $ if  and only if $ \mathbf{v}$ is an eigenvector of $ \gram[\Gamma\Upsilon] \ts B $.
\end{proposition}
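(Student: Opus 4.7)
The plan is to prove each biconditional by direct algebraic manipulation, leaning on two elementary features of the finite-rank operator $\mathcal{S} = \Upsilon B \Gamma^\top$. First, $\mathrm{range}(\mathcal{S}) \subseteq \mspan(\Upsilon)$, so any eigenfunction associated with $\lambda \neq 0$ must already live there (since $\upsilon = \lambda^{-1}\mathcal{S}\upsilon$). Second, the assumed linear independence of the entries of $\Upsilon$ (respectively $\Gamma$) makes $\gram[\Upsilon\Upsilon]$ (resp.\ $\gram[\Gamma\Gamma]$) invertible and, more importantly, lets one infer $\mathbf{a} = 0$ from $\Upsilon\mathbf{a} = 0$ or $\Gamma\mathbf{a}=0$. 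No analytic machinery is required, since $\mathcal{S}$ is finite-rank and everything reduces to linear algebra in $\R^n$.

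For the first biconditional, the forward direction is a one-line calculation: setting $\upsilon := \Upsilon\mathbf{v}$,
\[
    \mathcal{S}\upsilon
    = \Upsilon B\, \Gamma^\top\Upsilon\, \mathbf{v}
    = \Upsilon B\, \gram[\Gamma\Upsilon]\, \mathbf{v}
    = \lambda\, \Upsilon\mathbf{v}
    = \lambda\upsilon,
\]
with $\upsilon \neq 0$ by linear independence of $\Upsilon$ and $\mathbf{v}\neq 0$. Conversely, the range observation forces any eigenfunction for $\lambda\neq 0$ to have the form $\upsilon = \Upsilon\mathbf{v}$ for a unique nonzero $\mathbf{v}\in\R^n$; substituting back into $\mathcal{S}\upsilon = \lambda\upsilon$ gives $\Upsilon\bigl(B\gram[\Gamma\Upsilon]\mathbf{v} - \lambda\mathbf{v}\bigr) = 0$, and stripping $\Upsilon$ by linear independence yields $B\gram[\Gamma\Upsilon]\mathbf{v} = \lambda\mathbf{v}$.

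The second biconditional is handled by the same strategy but with a tilted ansatz $\gamma := \Gamma\gram[\Gamma\Gamma]^{-1}\mathbf{v}$, chosen precisely so that the identity $\Gamma^\top\Gamma\, \gram[\Gamma\Gamma]^{-1} = \id_n$ gives $\Gamma^\top\gamma = \mathbf{v}$ and therefore $\mathcal{S}\gamma = \Upsilon B\mathbf{v}$. Equating $\mathcal{S}\gamma$ with $\lambda\gamma$ and applying $\Gamma^\top$ to both sides produces the companion surrogate identity $\gram[\Gamma\Upsilon] B\mathbf{v} = \lambda\mathbf{v}$; the converse direction runs this chain backwards, reconstructing $\gamma$ from a given $\mathbf{v}$ and again using the linear-independence hypotheses to strip outer $\Gamma$-factors. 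The factor $\gram[\Gamma\Gamma]^{-1}$ is exactly the device that shifts the surrogate problem from $B\gram[\Gamma\Upsilon]$ in part~(a) to $\gram[\Gamma\Upsilon] B$ here.

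The main point that will warrant care is bookkeeping — tracking which linear-independence hypothesis is invoked where. Linear independence of $\Upsilon$ is needed in part~(a) both to guarantee that the ansatz $\upsilon = \Upsilon\mathbf{v}$ is non-vacuous and to strip $\Upsilon$ on the converse side; linear independence of $\Gamma$ plays the analogous role in part~(b), and additionally ensures that $\gram[\Gamma\Gamma]^{-1}$ in the ansatz is well defined. Beyond this, there is no serious obstacle: once the eigenfunction is forced into $\mspan(\Upsilon)$ (resp.\ identified via the $\gamma$ ansatz), the biconditional is just a rearrangement of the defining factorization of $\mathcal{S}$.
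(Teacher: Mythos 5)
Your treatment of the first equivalence is correct and is essentially the paper's computation, with one genuine refinement: you justify the ansatz $\upsilon = \Upsilon\mathbf{v}$ by observing that every eigenfunction for $\lambda \neq 0$ lies in $\mathrm{range}(\mathcal{S}) \subseteq \mspan(\Upsilon)$, and you make explicit that stripping the outer $\Upsilon$ is licensed by injectivity of $\mathbf{a} \mapsto \Upsilon\mathbf{a}$. The forward direction of the second equivalence (eigenfunction $\Rightarrow$ eigenvector) is also fine.

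The gap is in the converse of the second equivalence. You propose to \enquote{run the chain backwards} and to \enquote{strip outer $\Gamma$-factors by linear independence}, but the step that must be undone is the application of $\Gamma^\top$, and linear independence of the entries of $\Gamma$ makes $\Gamma \colon \R^n \to \mathbb{H}$ injective, \emph{not} $\Gamma^\top \colon \mathbb{H} \to \R^n$, whose kernel is $\mspan(\Gamma)^\perp$. Concretely, from $\gram[\Gamma\Upsilon] B \mathbf{v} = \lambda \mathbf{v}$ you only obtain $\Gamma^\top\bigl(\Upsilon B \mathbf{v} - \lambda\, \Gamma\, \gram[\Gamma\Gamma]^{-1}\mathbf{v}\bigr) = 0$; since $\mathcal{S}\gamma = \Upsilon B\mathbf{v} \in \mspan(\Upsilon)$ while $\lambda\gamma \in \mspan(\Gamma)$, the difference need not lie in the subspace on which $\Gamma^\top$ is injective, so $\mathcal{S}\gamma = \lambda\gamma$ does not follow. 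This is not a bookkeeping issue but a genuine failure: take $\mathbb{H} = \R^3$, $n=1$, $\Upsilon = [e_1]$, $\Gamma = [e_1 + e_2]$, $B = [1]$; then $\gram[\Gamma\Upsilon] B = [1]$ has eigenvector $\mathbf{v} = 1$ with $\lambda = 1$, yet $\gamma = \Gamma\, \gram[\Gamma\Gamma]^{-1}\mathbf{v} = \tfrac{1}{2}(e_1+e_2)$ satisfies $\mathcal{S}\gamma = e_1 \neq \lambda\gamma$. In fairness, the paper's own proof writes exactly this step as an unqualified \enquote{$\Leftrightarrow$} and thus has the same defect; the \enquote{if} direction of the second claim needs an extra hypothesis such as $\mspan(\Upsilon) \subseteq \mspan(\Gamma)$. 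The converse that \emph{does} hold in general follows from the first part: if $\gram[\Gamma\Upsilon] B \mathbf{v} = \lambda\mathbf{v}$ with $\lambda \neq 0$, then $B\mathbf{v} \neq 0$ and $B\, \gram[\Gamma\Upsilon](B\mathbf{v}) = \lambda (B\mathbf{v})$, so $\Upsilon B \mathbf{v}$ --- rather than $\Gamma\, \gram[\Gamma\Gamma]^{-1}\mathbf{v}$ --- is an eigenfunction of $\mathcal{S}$.
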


\begin{proof}
	Let $ \upsilon = \Upsilon \mathbf{v} $ be an eigenfunction of $\mathcal{S}$ associated with $ \lambda $. Then
	\begin{equation*}
	\mathcal{S} \upsilon = \lambda \upsilon 
	\quad \Leftrightarrow \quad
	\Upsilon \ts B \ts \gram[\Gamma\Upsilon] \ts \mathbf{v} = \lambda \ts \Upsilon \ts \mathbf{v}
	\quad \Leftrightarrow \quad
	B \ts \gram[\Gamma\Upsilon] \ts \mathbf{v} = \lambda \ts \mathbf{v}.
	\end{equation*}
	For the second part, let $ \gamma = \Gamma \gram[\Gamma\Gamma]^{-1} \mathbf{v}$ be an eigenfunction of $\mathcal{S}$. Then
	\begin{equation*}
	\mathcal{S} \gamma = \lambda \gamma
	\quad \Leftrightarrow \quad
	\Upsilon B \ts \gram[\Gamma\Gamma] \ts \gram[\Gamma\Gamma]^{-1} \ts \mathbf{v} = \lambda \ts  \Gamma \ts \gram[\Gamma\Gamma]^{-1} \ts \mathbf{v}
	\quad \Leftrightarrow \quad
	\gram[\Gamma\Upsilon] \ts B \ts \mathbf{v} = \lambda \mathbf{v}. \qedhere
	\end{equation*}
\end{proof} 

We thus obtain eigendecomposition expressions for all transfer operator estimators as listed in Table~\ref{tab:TO eigendecomposition}. We will use these for the experiments in Section~\ref{sec:experiments}.

\begin{example}
KPCA \citep{Scholkopf98:KPCA} can be interpreted as an eigendecomposition of the centered covariance operator $\cov[XX] - \mu_{\pp{P}(X)} \otimes \mu_{\pp{P}(X)}$, as already noted in \citet{sejdinovic2014KMH}. Our eigendecomposition result thus includes KPCA as a special case and extends it to RKHS operators of more general form.
\end{example}

\begin{example}
Let us analyze the Ornstein--Uhlenbeck process introduced in Example~\ref{ex:Ornstein-Uhlenbeck}. We use $ \tau = \frac{1}{2} $, $ \alpha = 4 $, and $ D = \frac{1}{4} $ and generate 5000 uniformly distributed test points in $ [-2, 2] $. Furthermore, we use the Gaussian kernel with $ \sigma^2 = 0.3 $. Applying our eigendecomposition result to the kernel Perron--Frobenius operator---since the test points are distributed uniformly, we obtain $ \pf[k] $, see Corollary~\ref{col:PF derivation}---and kernel Koopman operator yields the results shown in Figure~\ref{fig:OU kEDMD}. This special case is equivalent to the kernel EDMD method.\exampleSymbol
	
\begin{figure}[htb]
	\centering 
	\begin{minipage}{0.49\textwidth}
		\centering
		\subfiguretitle{a)}
		\includegraphics[width=0.9\textwidth]{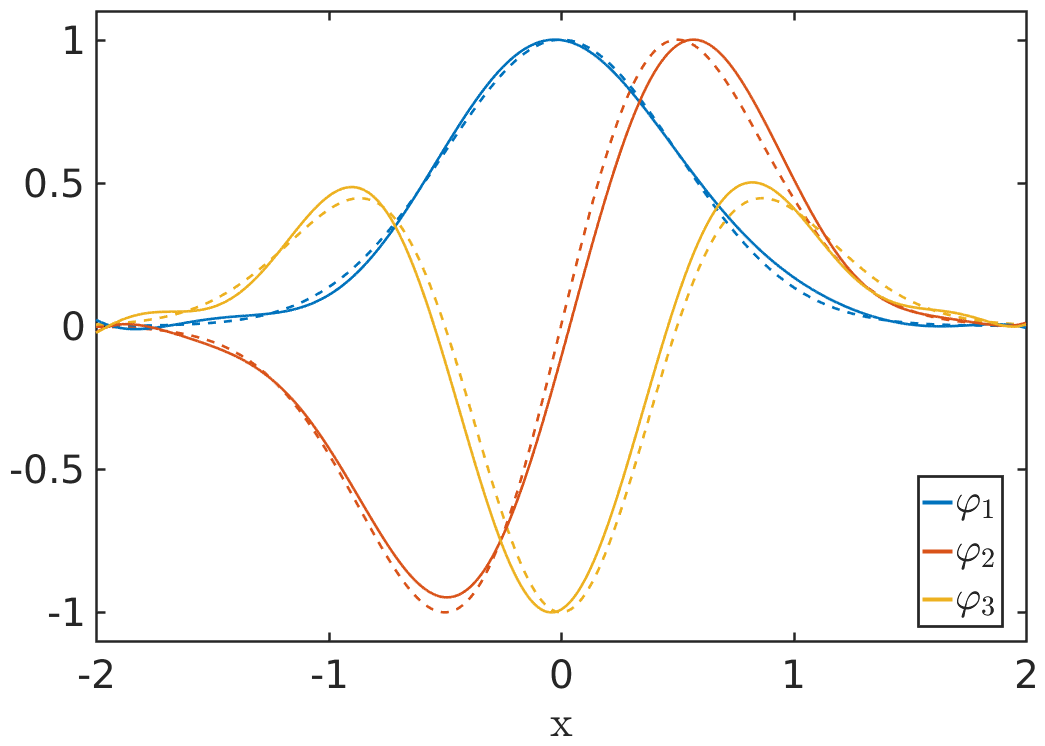}
	\end{minipage}
	\begin{minipage}{0.49\textwidth}
		\centering
		\subfiguretitle{b)}
		\includegraphics[width=0.9\textwidth]{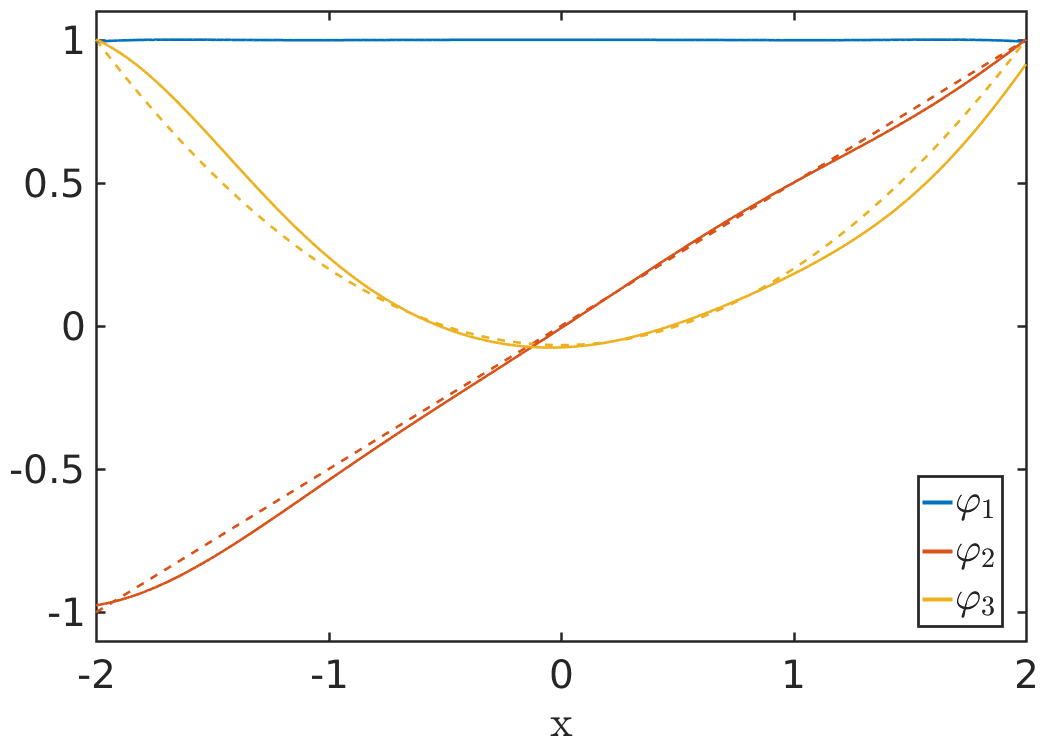}
	\end{minipage}
	\caption{a) Dominant eigenfunctions of the Perron--Frobenius operator $ \pf $ associated with the Ornstein--Uhlenbeck process. b) Dominant eigenfunctions of the Koopman operator $ \ko $. The solid lines are the numerically computed and the dotted lines the analytically computed eigenfunctions.}
	\label{fig:OU kEDMD}
\end{figure}

\end{example}

\subsection{Perron--Frobenius Operators}
\label{ssec:Perron--Frobenius}
In this section, we derive operators pushing forward densities $ p_t \in \mathbb{H}$ to $ p_{t+\tau} \in \mathbb{H} $ or kernel mean embeddings $ \mu_t $  to $ \mu_{t+\tau} $, respectively.

\subsubsection{Kernel Perron--Frobenius Operator}

First we consider the \emph{kernel Perron--Frobenius operator} $ \pf[k] $ defined on the RKHS $ \mathbb{H} $ endowed with the kernel $ k $. That is, we derive the operator pushing $ p_t $ forward to $ p_{t+\tau} $, assuming that $ p_t \in \mathbb{H} $ and $ p_{t+\tau} \in \mathbb{H} $. In general, this will not be the case and the question is how and under which conditions this operator approximates the Perron--Frobenius operator defined on $ L^1 $. Unlike the conditional mean operator \citep{Song2013}, the kernel Perron--Frobenius  operator describes propopagation of densities in the RKHS directly rather than in embedded form. We provide results on the convergence rate of the estimate in Section \ref{sec:consistency}. For kernels with explicitly given feature spaces, related convergence results can be found in~\citet{WKR15, KKS16, KoMe18}. Notice that the assumption that the relevant densities are in $\mathbb{H}$ is different from the embedding approach discussed in Section~\ref{sec:marginal-mean-embedding}. 
 
\begin{proposition} \label{prop:transfer-rkhs}
Let $ p_{\scriptscriptstyle \inspace} $ be the reference density on $ \inspace $, with $ p_{\scriptscriptstyle \inspace}(x) > 0 $ for all $ x $, and let $ \mathcal{A}_k \colon \mathbb{H} \rightarrow \mathbb{H} $ be the kernel transfer operator with respect to this density, i.e.,
\begin{equation*}
    \mathcal{A}_k \ts g(y) = \frac{1}{p_{\scriptscriptstyle \inspace}(y)} \int p_\tau(y \mid x) \ts g(x) \ts p_{\scriptscriptstyle \inspace}(x) \ts \dd x.
\end{equation*}
Assume that $\mathcal{A}_k \ts g \in \mathbb{H}$ for all $ g  \in \mathbb{H}$. Then $ \cov[XX] \mathcal{A}_k \ts g = \cov[YX] g $.
\end{proposition}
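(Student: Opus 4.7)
The statement mirrors Proposition~\ref{thm:Fukumizu04}, so my plan is to follow the same weak-formulation strategy: test both sides of the claimed identity against an arbitrary $f \in \mathbb{H}$ and show the resulting scalar quantities agree. The setup is that the joint distribution defining $\cov[XX]$ and $\cov[YX]$ is the one with marginal density $p_{\inspace}$ for $X$ and transition density $p_\tau(y \mid x)$ for $Y \mid X = x$, so that $(X,Y)$ has joint density $p_{\inspace}(x) p_\tau(y\mid x)$.

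First I would compute the left-hand side. Using self-adjointness of $\cov[XX]$, the reproducing property, and the fact that $\cov[XX]$ is a Bochner integral, one gets for any $f \in \mathbb{H}$ that
\begin{equation*}
    \innerprod{f}{\cov[XX]\,\mathcal{A}_k g}_{\mathbb{H}}
       = \mathbb{E}_{X \sim p_{\inspace}}\!\bigl[f(X)\,(\mathcal{A}_k g)(X)\bigr]
       = \int f(y)\,(\mathcal{A}_k g)(y)\,p_{\inspace}(y)\,\dd y.
\end{equation*}
Substituting the definition of $\mathcal{A}_k g$ and cancelling the factor $p_{\inspace}(y)^{-1}$ against the outer $p_{\inspace}(y)$, Fubini then yields
\begin{equation*}
    \innerprod{f}{\cov[XX]\,\mathcal{A}_k g}_{\mathbb{H}}
       = \iint f(y)\,p_\tau(y\mid x)\,g(x)\,p_{\inspace}(x)\,\dd x\,\dd y.
\end{equation*}

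Next I would compute the right-hand side using identity~\eqref{eq:Cross-covariance}: with $\mathbb{H} = \mathbb{G}$ and $\psi = \phi$,
\begin{equation*}
    \innerprod{f}{\cov[YX] g}_{\mathbb{H}}
       = \mathbb{E}_{XY}[f(Y)\,g(X)]
       = \iint f(y)\,g(x)\,p_{\inspace}(x)\,p_\tau(y\mid x)\,\dd x\,\dd y,
\end{equation*}
which is exactly the same double integral. Since $f \in \mathbb{H}$ was arbitrary and the hypothesis $\mathcal{A}_k g \in \mathbb{H}$ ensures both sides of the claimed equality live in $\mathbb{H}$, the identity $\cov[XX]\,\mathcal{A}_k g = \cov[YX] g$ follows.

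The one genuinely delicate step is the first one: pulling the inner product through the expectation to obtain $\innerprod{f}{\cov[XX] h}_{\mathbb{H}} = \mathbb{E}[f(X) h(X)]$ for $h \in \mathbb{H}$. This is a Bochner-integral interchange that requires the boundedness condition $\sup_x k(x,x) < \infty$ (or an appropriate integrability condition, as noted in the footnote to the covariance-operator definition) so that $\phi(X) \otimes \phi(X)$ is integrable as a Hilbert--Schmidt-valued random variable; given this assumption, the exchange is standard. Aside from that, the argument is essentially a weak-duality recasting that reuses the hypothesis $\mathcal{A}_k g \in \mathbb{H}$ in the same way Proposition~\ref{thm:Fukumizu04} uses its conditional-expectation hypothesis.
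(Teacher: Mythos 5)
Your proposal is correct and follows essentially the same route as the paper: both sides are tested against an arbitrary $f \in \mathbb{H}$, reduced via $\innerprod{f}{\cov[XX] h}_{\mathbb{H}} = \mathbb{E}_{X}[f(X)h(X)]$ and identity~\eqref{eq:Cross-covariance} to the same double integral $\iint f(y)\,g(x)\,p_\tau(y\mid x)\,p_{\scriptscriptstyle\inspace}(x)\,\dd x\,\dd y$. Your extra remark on the Bochner-integral interchange is a reasonable elaboration of a step the paper leaves implicit, but it does not change the argument.
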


\begin{proof}
The proof is similar to the proof of Proposition~\ref{thm:Fukumizu04}, which can be found, e.g., in~\citet{Fukumizu04}. Using~\eqref{eq:Cross-covariance}, it holds that
\begin{align*}
    \innerprod{f}{\cov[XX] \mathcal{A}_k \ts g}_{\mathbb{H}}
        &= \mathbb{E}_{\scriptscriptstyle X}[f(X) \ts \mathcal{A}_k \ts g(X)] \\
        &= \int f(y) \frac{1}{p_{\scriptscriptstyle \inspace}(y)} \int p_\tau(y \mid x) \ts p_{\scriptscriptstyle \inspace}(x) \ts g(x) \ts \dd x \ts p_{\scriptscriptstyle \inspace}(y) \ts \dd y \\
        &= \iint f(y) \ts g(x) \ts p_\tau(y \mid x) \ts p_{\scriptscriptstyle \inspace}(x) \ts \dd x \ts \dd y \\
        &= \iint f(y) \ts g(x) \ts p_\tau(x, y) \ts \dd x \ts \dd y \\
        &= \mathbb{E}_{\scriptscriptstyle XY}[g(X) \ts f(Y)] \\
        &= \innerprod{f}{\cov[YX] g}_{\mathbb{H}}. \qedhere
\end{align*}
\end{proof}

Based on Proposition~\ref{prop:transfer-rkhs}, we express the kernel transfer operator as $ \mathcal{A}_{k} = (\cov[XX] + \varepsilon\mathcal{I})^{-1} \ts \cov[YX] $. Let $ u_{\scriptscriptstyle \inspace} $ denote the uniform density on $ \inspace $ and $ \pi $ the invariant density. If $ p_{\scriptscriptstyle \inspace} = u_{\scriptscriptstyle \inspace} $, then $ \mathcal{A}_{k} = \pf[k] $ and if $ p_{\scriptscriptstyle \inspace} = \pi $, then $ \mathcal{A}_{k} = \mathcal{T}_{k} $, where $ \mathcal{T}_{k} $ denotes the kernel Perron--Frobenius operator with respect to the invariant density. It is important to note here that $ \inspace $ and $ \outspace $ as well as $ \mathbb{H} $ and $ \mathbb{G} $ have to be the same spaces, otherwise the operator would be undefined since $ \cov[YX] $ is a mapping from $ \mathbb{H} $ to $ \mathbb{G} $ and $ (\cov[XX] + \varepsilon \idop)^{-1} $ a mapping from $ \mathbb{H} $ to $ \mathbb{H} $.

\begin{corollary} \label{col:PF derivation}
For specific choices of $ p_{\scriptscriptstyle \inspace} $, we obtain:
\begin{enumerate}[label=(\roman*), itemsep=0ex, topsep=1ex]
\item If $ p_{\scriptscriptstyle \inspace} = u_{\scriptscriptstyle \inspace} $, then $ \pf[k] = (\cov[XX] + \varepsilon \idop)^{-1} \ts \cov[YX] $.
\item If $ p_{\scriptscriptstyle \inspace} = \pi $, then $ \mathcal{T}_{k} = (\cov[XX] + \varepsilon \idop)^{-1} \ts \cov[YX] $.
\end{enumerate}
\end{corollary}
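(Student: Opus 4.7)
The plan is to apply Proposition~\ref{prop:transfer-rkhs} directly by specialising the reference density $p_{\scriptscriptstyle \inspace}$, so that the abstract operator $\mathcal{A}_k$ is identified with each of the concrete kernel transfer operators $\pf[k]$ and $\mathcal{T}_k$ in turn. Since Proposition~\ref{prop:transfer-rkhs} furnishes the identity $\cov[XX] \mathcal{A}_k \ts g = \cov[YX] \ts g$ for every $g \in \mathbb{H}$, regularised inversion of $\cov[XX]$ on the left then delivers the claimed closed form.

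For part (i) I would set $p_{\scriptscriptstyle \inspace} = u_{\scriptscriptstyle \inspace}$, the uniform density on $\inspace$. Substituting into the defining relation
\[
    \mathcal{A}_k \ts g(y) = \frac{1}{p_{\scriptscriptstyle \inspace}(y)} \int p_\tau(y \mid x) \ts g(x) \ts p_{\scriptscriptstyle \inspace}(x) \ts \dd x
\]
causes the two constant factors to cancel, leaving $\mathcal{A}_k \ts g(y) = \int p_\tau(y \mid x) \ts g(x) \ts \dd x$. Comparison with Definition~\ref{def:Transfer operators}(i) identifies this with the action of the Perron--Frobenius operator $\pf$ on $g$; viewed as an operator on the RKHS $\mathbb{H}$, this is by definition $\pf[k]$.

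For part (ii) I would substitute $p_{\scriptscriptstyle \inspace} = \pi$, the invariant density. The defining formula becomes
\[
    \mathcal{A}_k \ts g(y) = \frac{1}{\pi(y)} \int p_\tau(y \mid x) \ts \pi(x) \ts g(x) \ts \dd x,
\]
which is precisely the definition of the Perron--Frobenius operator $\mathcal{T}$ with respect to the equilibrium density. Restricted to $\mathbb{H}$ this is $\mathcal{T}_k$, and applying $(\cov[XX] + \varepsilon \idop)^{-1}$ to both sides of the identity from Proposition~\ref{prop:transfer-rkhs} then yields the stated representation in each of the two cases.

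The only subtlety I anticipate is not in the algebra but in the regularised inversion itself: since $\cov[XX]$ need not be boundedly invertible, I would adopt the Tikhonov-style approximation already used in the paper (see the remark following Definition~\ref{def:cme}) and replace $\cov[XX]^{-1}$ by $(\cov[XX] + \varepsilon \idop)^{-1}$, which is the formulation under which the rest of the paper operates. Beyond this, no genuinely hard step is expected—the corollary is essentially a substitution exercise once Proposition~\ref{prop:transfer-rkhs} is in hand.
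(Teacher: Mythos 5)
Your proposal is correct and follows essentially the same route as the paper, which likewise obtains the corollary by specialising the reference density in Proposition~\ref{prop:transfer-rkhs} (so that $\mathcal{A}_k$ reduces to $\pf$ respectively $\mathcal{T}$ restricted to $\mathbb{H}$) and then writing $\mathcal{A}_k = (\cov[XX] + \varepsilon\ts\idop)^{-1}\cov[YX]$ via the regularised inverse. Your remark that the regularised inversion is the only delicate point matches the paper's own treatment.
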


This is consistent with the derivation of EDMD for the Perron--Frobenius operator using---from a kernel point of view---explicitly given finite-dimensional feature spaces. In this case, setting $\varepsilon=0$, the empirical estimates of the operators converge to a Galerkin approximation, see \citet{KKS16} for details. If the matrix $ \cov[XX] $ is not invertible, the pseudoinverse is used instead.

\begin{proposition} \label{pro:Estimation Perron-Frobenius}
The empirical estimate $ \epf[k] $ of the kernel Perron--Frobenius operator $ \pf[k] $ can be written as $ \epf[k] = \Psi A \Phi^\top$, where $ A = \gram[XY]^{-1} \ts (\gram[XX]+n \varepsilon \id)^{-1} \ts \gram[XY] $. Here, we assume that $ \gram[XY] $ is invertible.
\end{proposition}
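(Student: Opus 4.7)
The plan is to characterise $\epf[k]$ as the unique finite-rank operator of the form $\Psi A \ts \Phi^\top$ that satisfies the sample version of the characterising identity from Proposition~\ref{prop:transfer-rkhs} (in regularised form) on the data-accessible subspace. Concretely, I would substitute the outer-product forms $\ecov[XX] = \tfrac{1}{n}\Phi \ts \Phi^\top$ and $\ecov[YX] = \tfrac{1}{n}\Psi \ts \Phi^\top$ into $(\ecov[XX] + \varepsilon \ts \idop) \ts \epf[k] = \ecov[YX]$, posit the ansatz $\epf[k] = \Psi A \ts \Phi^\top$ with $A \in \R^{n\times n}$ to be determined, and then solve for $A$. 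The ansatz is natural because it describes the most general operator whose image lies in $\mspan(\Psi)$ and whose action on any $g \in \mathbb{H}$ uses only the point evaluations $\Phi^\top g = (g(x_1), \dots, g(x_n))^\top$, which are the only observations of $g$ accessible from the training sample.

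Plugging the ansatz into the regularised identity and testing both sides against data-based functions $g = \Phi \ts \mathbf{c}$, and using $\Phi^\top\Phi = \gram[XX]$ and $\Phi^\top\Psi = \gram[XY]$, I would obtain
\begin{equation*}
    \tfrac{1}{n}\Phi \ts \gram[XY] \ts A \ts \gram[XX] \ts \mathbf{c}
    + \varepsilon \ts \Psi A \ts \gram[XX] \ts \mathbf{c}
    = \tfrac{1}{n}\Psi \ts \gram[XX] \ts \mathbf{c}.
\end{equation*}
Because $\Phi$ and $\Psi$ need not lie in the same subspace of $\mathbb{H}$, this vector identity cannot hold in $\mathbb{H}$ for every $\mathbf{c}$; I would therefore apply the Galerkin projection $\Phi^\top$ to both sides, reducing the equation to the matricial identity
\begin{equation*}
    \bigl(\tfrac{1}{n}\gram[XX] + \varepsilon \ts \id\bigr)\gram[XY] \ts A \ts \gram[XX] \ts \mathbf{c}
    = \tfrac{1}{n}\gram[XY] \ts \gram[XX] \ts \mathbf{c}.
\end{equation*}
Since this has to hold for every $\mathbf{c} \in \R^n$, dropping $\mathbf{c}$ and multiplying by $n$ gives $(\gram[XX] + n \ts \varepsilon \ts \id)\gram[XY] \ts A \ts \gram[XX] = \gram[XY] \ts \gram[XX]$. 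The assumed invertibility of $\gram[XY] = \Phi^\top\Psi$ forces the invertibility of $\gram[XX]$ as well, since any nontrivial linear dependence among $\phi(x_1), \dots, \phi(x_n)$ would induce the same dependence among the rows of $\gram[XY]$; I can therefore cancel $\gram[XX]$ on the right and invert $(\gram[XX] + n \ts \varepsilon \ts \id)$ and $\gram[XY]$ on the left to read off $A = \gram[XY]^{-1}(\gram[XX] + n \ts \varepsilon \ts \id)^{-1}\gram[XY]$, as claimed.

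The main subtlety is the Galerkin projection: because $\mspan(\Phi) \ne \mspan(\Psi)$ in general, no operator of the specific form $\Psi A \ts \Phi^\top$ can literally satisfy $(\ecov[XX] + \varepsilon \ts \idop)\epf[k] = \ecov[YX]$ as an equality of operators on all of $\mathbb{H}$; the identity can only be imposed modulo the orthogonal complement of $\mspan(\Phi)$, which is exactly what the projection $\Phi^\top$ encodes and what is consistent with the ``$\approx$'' sign in Table~\ref{tab:RKHS transfer operators}. The hypothesis that $\gram[XY]$ be invertible is precisely what is needed to recover $A$ uniquely from the projected equation and to guarantee that the resulting operator is a bona fide estimator of $\pf[k]$.
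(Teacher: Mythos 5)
Your proposal is correct and follows essentially the same route as the paper: posit the ansatz $\epf[k] = \Psi A \ts \Phi^\top$, reduce the operator identity to a Gram-matrix equation by left-multiplying with $\Phi^\top$, and solve for $A$ using the invertibility of $\gram[XY]$. The only differences are bookkeeping: the paper works with the inverted form and the push-through identity $\Phi^\top(\Phi\Phi^\top + n\varepsilon\idop)^{-1} = (\gram[XX] + n\varepsilon\id)^{-1}\Phi^\top$, whereas you keep the non-inverted identity, test against $g = \Phi\ts\mathbf{c}$, and cancel the resulting extra factor of $\gram[XX]$ (correctly observing that its invertibility follows from that of $\gram[XY]$), and your explicit remark that the operator equation can only be imposed after projection onto $\mspan(\Phi)$ is a clarification of the paper's terser ``dropping the $\Phi^\top$'' step.
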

\begin{proof}
The idea is to simply solve the equation
\begin{equation*}
    \epf[k] = (\ecov[XX] + \varepsilon \idop)^{-1} \ts \ecov[YX]
      = \left(\frac{1}{n} \Phi\Phi^\top + \varepsilon \idop\right)^{-1} \frac{1}{n} \Psi \Phi^\top 
      = \Psi A \Phi^\top
\end{equation*}
for $A$. Dropping the $\Phi^\top$ on the right, we multiply the equations from the left by $ \Phi^\top $, which leads to
\begin{equation*}
    \Phi^\top \left(\Phi\Phi^\top + \varepsilon n \idop\right)^{-1} \Psi
     = \left( \Phi^\top \Phi + \varepsilon n \id\right)^{-1} \Phi^\top \Psi
     = \Phi^\top \Psi A
\end{equation*}
and thus $ (\gram[XX] + n \varepsilon \id)^{-1} \gram[XY] = \gram[XY] A $. 
\end{proof}

Here, we  used the identity $ \Phi^\top \left(\Phi\Phi^\top + \varepsilon n \idop\right)^{-1} = \left( \Phi^\top \Phi + \varepsilon n \id\right)^{-1} \Phi^\top $, see also \cite{MFSS16}. With the aid of the reproducing property of $\mathbb{H}$ and assuming that $ p \in \mathbb{H} $, we can write
\begin{align*}
    \pf[k] \ts p(x)
        &= \innerprod{(\cov[XX]+\varepsilon \idop)^{-1} \ts \cov[YX] \ts p}{k(x, \cdot)}_\mathbb{H} \\
        &= \innerprod{p}{\cov[XY] \ts (\cov[XX] + \varepsilon \idop)^{-1} \ts k(x, \cdot)}_\mathbb{H} \\
        &= \innerprod{p}{\ko[\ebd] \ts k(x, \cdot)}_\mathbb{H},
\end{align*}
where $ \ko[\ebd] = \cov[XY] \ts (\cov[XX] + \varepsilon \idop)^{-1} $. Thus, the action of the Perron--Frobenius operator can be interpreted as an inner product in a Hilbert space.
We will call $ \ko[\ebd] $ the \emph{embedded Koopman operator} and discuss it in detail in Section~\ref{ssec:Embedded Koopman}.

\subsubsection{Embedded Perron--Frobenius Operator}
\label{ssec:Embedded Perron--Frobenius Operator}

Up to now, we assumed that the densities $ p_t $ and $ p_{t+\tau} $ are elements of the RKHS $ \mathbb{H} $. Now we first embed the densities into the RKHS $ \mathbb{H} $ using the mean embedding and consider the corresponding kernel mean embeddings $ \mu_t $ and $ \mu_{t+\tau} $. Let $ \mu_t = \ebd[k] \ts p_t $  be a Hilbert space embedding of the density $ p_t $, then the Perron--Frobenius operator for embedded densities can be expressed in terms of the conditional mean embedding $ \cme $ as
\begin{equation*}
    \mu_{t + \tau}
        = \cme \ts \mu_t
        = \cov[YX] \ts (\cov[XX] + \varepsilon\mathcal{I})^{-1} \ts \mu_t,
\end{equation*}
where $ \mu_{t+\tau} $ is the Hilbert space embedding of the density $ p_{t + \tau} $. The above equality is guaranteed under the assumption that $ \cov[XX] $ is injective, $\mu_t\in \mathrm{Range}(\cov[XX])$, and $\mathbb{E}_{Y|X}[g(Y) \mid X = \cdot \,] \in\mathbb{H}$ for all $g\in\mathbb{H}$ (see also \citealt{Fukumizu13:KBR}). Thus, we define $ \pf[\ebd] = \cme $ to be the embedded Perron--Frobenius operator, whose empirical estimate is given by
\begin{equation*}
    \epf[\ebd]
        = \ecov[YX] \ts (\ecov[XX] + \varepsilon\mathcal{I})^{-1} 
        = \left(\frac{1}{n}\Psi \Phi^\top\right) \left(\frac{1}{n} \Phi \Phi^\top + \varepsilon\mathcal{I}\right)^{-1}
        = \Psi \ts (\gram[XX] + n\varepsilon \id)^{-1} \Phi^\top.
\end{equation*}

\begin{proposition} \label{pro:Commutativity Perron-Frobenius}
Assume that $ \pf[\ebd] $ exists without necessitating regularization.\!\footnote{This holds, for instance, for finite domains equipped with characteristic kernels, but not necessarily for continuous domains \citep{Song2013}.}
Let $ p_t \in L^1 $ be a probability density and $ \mu_t = \ebd[k] \ts p_t $ the corresponding embedded density. Then the diagram
\begin{equation*}
    \begin{tikzcd}
        L^1 \ni \hspace*{-4em}
            & p_t \rar{\ebd[k]} \arrow[swap]{d}{\pf}
            & \mu_t \arrow{d}{\pf[\ebd]}
            & \hspace*{-4.5em} \in \mathbb{H} \\
        L^1 \ni \hspace*{-3em}
            & p_{t+\tau} \rar{\ebd[k]}
            & \mu_{t+\tau}
            & \hspace*{-3.5em} \in \mathbb{H}
    \end{tikzcd}
\end{equation*} 
is commutative.
\end{proposition}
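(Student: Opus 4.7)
I would prove commutativity by fixing an arbitrary density $p_t\in L^1$ and showing the two elements $\ebd[k](\pf p_t)$ and $\pf[\ebd](\ebd[k]\ts p_t)$ of $\mathbb{H}$ coincide. Rather than manipulate Bochner integrals directly, I would test against an arbitrary $g\in\mathbb{H}$ and reduce everything to Proposition~\ref{thm:Fukumizu04} via the reproducing property. The whole thing is essentially an adjoint computation glued together by that proposition.

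\textbf{Key steps.} First, using the reproducing property and the definition of the integral operator,
\begin{equation*}
\innerprod{g}{\ebd[k](\pf p_t)}_\mathbb{H}
= \int g(y)\ts(\pf p_t)(y)\ts\dd y
= \iint g(y)\ts p_\tau(y\mid x)\ts p_t(x)\ts\dd x\ts\dd y,
\end{equation*}
and Fubini collapses the inner $y$-integral to $\mathbb{E}_{Y\mid X}[g(Y)\mid X=x]$, giving $\int\mathbb{E}_{Y\mid X}[g(Y)\mid X=x]\ts p_t(x)\ts\dd x$. Next, a direct consequence of the reproducing property applied to the Bochner integral $\mu_t=\ebd[k]\ts p_t$ is the identity $\int h(x)\ts p_t(x)\ts\dd x=\innerprod{h}{\mu_t}_\mathbb{H}$ for every $h\in\mathbb{H}$. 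Setting $h(\cdot):=\mathbb{E}_{Y\mid X}[g(Y)\mid X=\cdot]$, which lies in $\mathbb{H}$ by the standing assumption underpinning $\pf[\ebd]$, the previous display becomes $\innerprod{h}{\ebd[k]\ts p_t}_\mathbb{H}$. Finally, Proposition~\ref{thm:Fukumizu04} yields $\cov[XX]\ts h=\cov[XY]\ts g$, so $h=\cov[XX]^{-1}\cov[XY]\ts g$ (invertibility follows from the no-regularization hypothesis in the footnote). Taking adjoints with $\cov[YX]=\cov[XY]^{*}$ and $\cov[XX]$ self-adjoint,
\begin{equation*}
\innerprod{h}{\ebd[k]\ts p_t}_\mathbb{H}
= \innerprod{g}{\cov[YX]\ts\cov[XX]^{-1}\ts\ebd[k]\ts p_t}_\mathbb{H}
= \innerprod{g}{\pf[\ebd]\ts\ebd[k]\ts p_t}_\mathbb{H}.
\end{equation*}
Since $g\in\mathbb{H}$ was arbitrary, $\ebd[k](\pf p_t)=\pf[\ebd](\ebd[k]\ts p_t)$, closing the diagram.

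\textbf{Main obstacle.} The delicate part is hypothesis alignment rather than any single calculation: I need enough integrability to justify Fubini on $g(y)\ts p_\tau(y\mid x)\ts p_t(x)$, I need $\mathbb{E}_{Y\mid X}[g(Y)\mid X=\cdot]\in\mathbb{H}$ for every $g$ in order to invoke Proposition~\ref{thm:Fukumizu04}, and I need genuine (not merely regularized) invertibility of $\cov[XX]$ on the range of $\cov[XY]$. On continuous domains the last point typically fails because $\cov[XX]$ has dense but non-closed range, which is precisely why the statement restricts to the unregularized setting via its footnote; outside that setting, the best one should expect is commutativity up to the regularization error $O(\varepsilon)$, consistent with the convergence-rate remark accompanying the kernel transfer operators.
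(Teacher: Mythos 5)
Your argument is correct, but it takes a dual route to the one in the paper. The paper's proof works directly with the Bochner integrals: it writes $\ebd[k]\ts p_t = \int k(x,\cdot)\ts p_t(x)\ts\dd x$, pulls $\pf[\ebd]$ inside the integral, invokes the pointwise conditional-mean-embedding identity $\pf[\ebd]\ts k(x,\cdot) = \mathbb{E}_{\scriptscriptstyle Y\mid x}[\phi(Y)\mid X=x] = \int p_\tau(y\mid x)\ts\phi(y)\ts\dd y$, and then matches the resulting double integral with $\ebd[k](\pf p_t)$ by Fubini. You instead test both sides against an arbitrary $g\in\mathbb{H}$, reduce the comparison to scalar integrals, and close the loop with Proposition~\ref{thm:Fukumizu04} together with the adjoint relation $\cov[XY]^{*}=\cov[YX]$. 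The two arguments lean on the same underlying facts --- indeed the identity $\pf[\ebd]\ts k(x,\cdot)=\mu_{\scriptscriptstyle Y\mid x}$ used in the paper is itself derived from Proposition~\ref{thm:Fukumizu04} --- but your weak formulation trades the interchange of a bounded operator with a Bochner integral and Bochner--Fubini for ordinary scalar Fubini plus an adjoint computation, which makes the required hypotheses (integrability of $g(y)\ts p_\tau(y\mid x)\ts p_t(x)$, membership of $\mathbb{E}_{\scriptscriptstyle Y\mid X}[g(Y)\mid X=\cdot\,]$ in $\mathbb{H}$, genuine invertibility of $\cov[XX]$ on the relevant range) more visible; the paper's version is shorter and exhibits the common integral representation of both sides explicitly. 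One small caveat in your write-up: when you pass from $h=\cov[XX]^{-1}\cov[XY]\ts g$ to the adjoint identity, $\cov[XX]^{-1}$ is in general unbounded on continuous domains, so the adjoint manipulation is only formal at the same level of rigor as the paper's own treatment --- your closing remark about the footnote and the $O(\varepsilon)$ discrepancy in the regularized setting addresses exactly this and is consistent with the paper's framing.
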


\begin{proof}
Applying $ \pf $ to $ p_t $ and then embedding the resulting density leads to
\begin{equation*}
    \ebd[k](\pf p_t) = \int k(y, \cdot) \int p_\tau(y \mid x) \ts p_t(x) \ts \dd x \ts \dd y,
\end{equation*}
embedding $ p_t $ and then applying the embedded Perron--Frobenius operator to
\begin{align*}
    \pf[\ebd](\ebd[k] \ts  p_t)
        &= \pf[\ebd] \int k(x, \cdot) \ts p_t(x) \ts \dd x \\
        &= \int \pf[\ebd] \ts k(x, \cdot) \ts p_t(x) \ts \dd x \\
        &= \int \mathbb{E}_{\scriptscriptstyle Y \mid x}[\phi(Y) \mid X = x] \ts p_t(x) \ts \dd x \\
        &= \iint p_\tau(y \mid x) \ts \phi(y) \ts \dd y \ts p_t(x) \ts \dd x \\
        &= \int k(y, \cdot) \int p_\tau(y \mid x) \ts p_t(x) \ts \dd x \ts \dd y. \qedhere
\end{align*}
\end{proof}

If we assume that the feature space is finite-dimensional and $\ecov[XX]^{-1}$ exists without necessitating regularization, then the commutativity for the empirical estimates can be seen as follows: Let $ p_t $ be a probability density, then the empirical estimate of the kernel mean embedding of $ p_t $ is $ \widehat{\mu}_t = \frac{1}{n} \Phi \mathds{1} $. Applying $ \epf[\ebd] $ yields
\begin{equation*}
    \epf[\ebd] \ts \widehat{\mu}_{t}
        = \tfrac{1}{n} \ecov[YX] \ts \ecov[XX]^{-1} \Phi \mathds{1}
        = \tfrac{1}{n} \Psi \Phi^\top (\Phi \Phi^\top)^{-1} \Phi \mathds{1}
        = \tfrac{1}{n} \Psi \mathds{1}
        = \widehat{\mu}_{t+\tau}.
\end{equation*}
That is, we obtain the empirical estimate of the mean embedding of the density $ p_{t+\tau} $.

\begin{example}
Let us consider the Ornstein--Uhlenbeck process from Example~\ref{ex:Ornstein-Uhlenbeck}. We choose $ \tau = \frac{1}{2} $, $ \alpha = 4 $, $ D = \frac{1}{4} $, and the Gaussian kernel with $ \sigma^2 = \frac{1}{2} $. Figure~\ref{fig:OU embedding}\,a shows the piecewise constant initial probability density $ p_0 $ pushed forward by the Perron--Frobenius operator, Figure~\ref{fig:OU embedding}\,b the embedded initial density $ \mu_0 $ pushed forward by the embedded Perron--Frobenius operator. While the smoother embedded function $ \mu_0 $ can be easily approximated using only a few (e.g., uniformly distributed) data points, representing the function $ p_0 $ accurately as a weighted sum of Gaussians is challenging. This is illustrated in Figure~\ref{fig:OU embedding}\,c and \ref{fig:OU embedding}\,d.

 \exampleSymbol

\begin{figure}[tb]
    \centering
    \begin{minipage}{0.49\textwidth}
        \centering
        \subfiguretitle{a)}
        \includegraphics[width=0.9\textwidth]{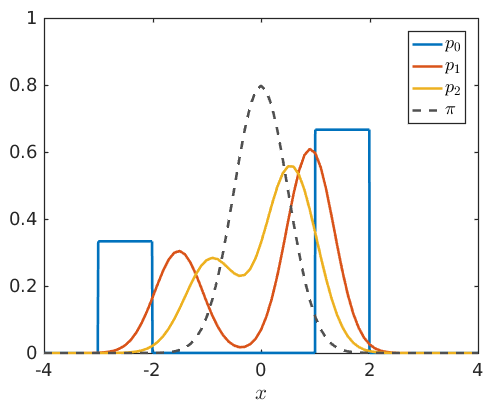}
    \end{minipage}
    \begin{minipage}{0.49\textwidth}
        \centering
        \subfiguretitle{b)}
        \includegraphics[width=0.9\textwidth]{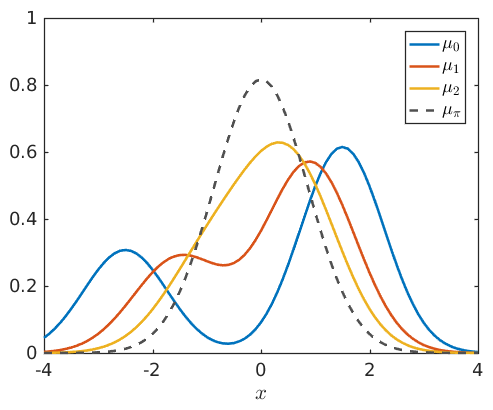}
    \end{minipage}
    \begin{minipage}{0.49\textwidth}
        \centering
        \subfiguretitle{c)}
        \includegraphics[width=0.9\textwidth]{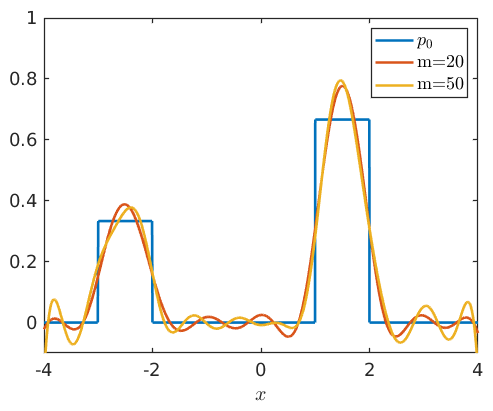}
    \end{minipage}
    \begin{minipage}{0.49\textwidth}
        \centering
        \subfiguretitle{d)}
        \includegraphics[width=0.9\textwidth]{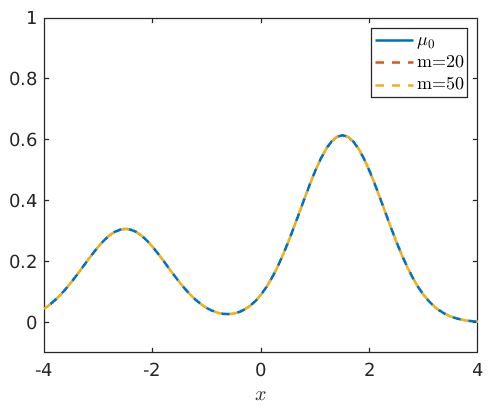}
    \end{minipage}
    \caption{a) Propagation of the initial density $ p_0 $ by the Perron--Frobenius operator, where $ p_1 = \pf  p_0 $ and $ p_2 = \pf p_1 $. b)~Propagation of the embedded density $ \mu_0 $ by the embedded Perron--Frobenius operator, where $ \mu_1 = \pf[\ebd] \mu_0 $ and $ \mu_2 = \pf[\ebd] \mu_1 $. The dashed black lines show the invariant and embedded invariant density, respectively. c--d) Best approximation of $ p_0 $ and $ \mu_0 $ using $ m $ uniformly distributed test points and the Gaussian kernel with bandwidth $ \sigma^2 = \frac{1}{2} $.}
    \label{fig:OU embedding}
\end{figure}

\end{example}

The Perron--Frobenius operator $ \pf $ maps densities $ p_t \in L^1 $ to $ p_{t+\tau} \in L^1 $, while the embedded Perron--Frobenius operator $ \pf[\ebd] $, given by the conditional mean embedding $ \cme $, maps the embeddings $ \mu_t \in \mathbb{H} $ to $ \mu_{t+\tau} \in \mathbb{H} $. Thus, the conditional mean embedding plays a similar role as the classical Perron--Frobenius operator in that it pushes forward---in this case: embedded---densities. Note that if we embed an eigenfunction of $ \pf $, we automatically obtain an eigenfunction of $ \pf[\ebd] $. This is due to the linearity of the integral.

\begin{remark}
An open question is for what classes of dynamical systems the embedded eigenfunctions are easier to approximate and whether we can directly use the embedded eigenfunctions to detect, e.g., metastable states and lower-dimensional representations or whether we have to compute the preimages of the embedded functions. The problem of reconstructing probability densities or observables from their embedded counterparts is addressed in \cite{SMKM19}.
\end{remark}

\subsection{Koopman Operators}
\label{ssec:Koopman}

Instead of the push-forward of densities or kernel mean embeddings we now consider the push-forward of observables.

\subsubsection{Kernel Koopman Operator}

Like the kernel Perron--Frobenius operator, we now introduce the corresponding \emph{kernel Koopman operator}, denoted by $ \ko[k] $. That is, we assume that the observables mapped forward by the Koopman operator are elements of $ \mathbb{H} $. From Proposition~\ref{thm:Fukumizu04}, it follows  that $ \ko[k] = (\cov[XX] + \varepsilon\mathcal{I})^{-1} \ts \cov[XY] $ and thus for all $ f \in \mathbb{H} $ that
\begin{equation} \label{eq:koopman-adjoint}
   \left(\ko[k] f\right)(x)
        = \innerprod{(\cov[XX] + \varepsilon\mathcal{I})^{-1} \ts \cov[XY] f}{k(x, \cdot)}
        = \innerprod{f}{\pf[\ebd] \ts k(x, \cdot)}.
\end{equation}
The empirical estimate of the Koopman operator is then given by
\begin{equation*}
    \eko[k]
        = (\ecov[XX] + \varepsilon\mathcal{I})^{-1} \ts \ecov[XY]
        = \left(\frac{1}{n} \Phi \Phi^\top + \varepsilon\mathcal{I}\right)^{-1} \left(\frac{1}{n} \Phi \Psi^\top\right)
        = \Phi \ts (\gram[XX] + n \varepsilon \id)^{-1} \Psi^\top.
\end{equation*}

\color{black}
\begin{example} \label{ex:EDMD simple system}
Let us approximate the kernel Koopman operator associated with the system defined in Example~\ref{ex:Simple system} using the explicit feature space representation of the kernel from Example~\ref{ex:Kernel}. Generating $ 10000 $ test points $ x_i $ sampled from a uniform distribution on $ \inspace = [-2,\, 2] \times [-2,\, 2] $ and the corresponding $ y_i = F(x_i) $ values and setting $ \varepsilon = 0 $, we can compute the empirical estimator $ \eko[k] = \big(\Phi \Phi^\top\big)^{-1} \big(\Phi \Psi^\top \big) = \big(\Psi \Phi^+\big)^\top \in \R^{6 \times 6} $. Here, $ ^+ $ denotes the pseudoinverse. The dominant eigenvalues and right eigenvectors as well as the corresponding eigenfunctions are given by
\begin{equation*}
    \begin{array}{l@{\quad}l@{\,}llllll@{\,}l@{\quad}l}
        \lambda_1 = 1.0,
            & v_1 = [& 1 & 0 & 0 & 0 & 0 & 0 & ]^\top,
            & \varphi_1(x) = \innerprod{v_1}{\phi(x)} = 1, \\
        \lambda_2 = 0.8,
            & v_2 = [& 0 & 0.7071 & 0 & 0 & 0 & 0 & ]^\top,
            & \varphi_2(x) = \innerprod{v_2}{\phi(x)} \approx x_1, \\
        \lambda_3 = 0.7,
            & v_3 = [& 0 & 0 & 0.7071 & 1 & 0 & 0 & ]^\top,
            & \varphi_3(x) = \innerprod{v_3}{\phi(x)} \approx x_2 + x_1^2.
    \end{array}
\end{equation*} 
This is in good agreement with the analytically computed results. The subsequent eigenvalues and eigenfunctions are simply products of the eigenvalues and eigenfunctions listed above, see Example~\ref{ex:Simple system}. Note, however, that other than $ \varphi_4 $ further products of eigenfunctions cannot be represented as functions in $ \mathbb{H} $ anymore since the feature space does not contain polynomials of order greater than two. \exampleSymbol
\end{example}

The approach to obtain an approximation of transfer operators from data as described in the above example is also referred to as EDMD~\citep{WKR15, KKS16}. Details regarding the relationships with other methods can be found in Section~\ref{ssec:Relationships with other methods} and further examples in Section~\ref{sec:experiments}.

\subsubsection{Embedded Koopman Operator}
\label{ssec:Embedded Koopman}

If we want to embed the Koopman operator in the same way as the Perron--Frobenius operator, we need to introduce embedded observables first, which can be interpreted as the \emph{Koopman analogue} of the mean embedding of distributions. Let $ f \colon \inspace \to \R $ be an observable of the system. We define $\nu := \ebd[k] f $ to be the \emph{embedded observable}. Given a set of training data, the empirical estimate $ \widehat{\nu} $ of the embedded observable is given by
\begin{equation*}
    \widehat{\nu}
        = \frac{1}{n} \sum_{i=1}^n \phi(x_i) \ts f(x_i)
        = \frac{1}{n} \sum_{i=1}^n k(x_i, \cdot) \ts f(x_i)
        = \frac{1}{n} \Phi \widehat{f},
\end{equation*}
where $ \widehat{f} = [f(x_1), \, \dots, \, f(x_n)]^\top $ contains the values of the observable evaluated at the training data points. Note that the data points do not have to be drawn from a particular probability distribution. Analogously to the embedded Perron--Frobenius operator, we define the \emph{embedded Koopman operator} by $ \ko[\ebd] = \cov[XY] \ts (\cov[XX] + \varepsilon\mathcal{I})^{-1} $.

\begin{proposition} \label{pro:Commutativity Koopman}
Analogously to Proposition~\ref{pro:Commutativity Perron-Frobenius}, we assume that $ \ko[\ebd] $ exists without necessitating regularization. Let $ f_t $ be an observable and $ \nu_t = \ebd[k] f_t $ the corresponding embedded observable. Then the diagram
\begin{equation*}
    \begin{tikzcd}
        L^\infty \ni \hspace*{-4em}
            & f_t \rar{\ebd[k]} \arrow[swap]{d}{\ko}
            & \nu_t \arrow{d}{\ko[\ebd]}
            & \hspace*{-4.5em} \in \mathbb{H} \\
        L^\infty \ni \hspace*{-3em}
            & f_{t+\tau} \rar{\ebd[k]}
            & \nu_{t+\tau}
            & \hspace*{-3.5em} \in \mathbb{H}
    \end{tikzcd}
\end{equation*}
is commutative.
\end{proposition}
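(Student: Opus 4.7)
The plan is to mirror the proof of Proposition~\ref{pro:Commutativity Perron-Frobenius}, evaluating the two legs of the diagram as Bochner integrals in $\mathbb{H}$ and checking agreement by Fubini. Applying $\ko$ to $f_t$ and then embedding with $\ebd[k]$ yields
\begin{equation*}
    \ebd[k](\ko f_t) = \int k(x, \cdot) \ts (\ko f_t)(x) \ts \dd x = \iint k(x, \cdot) \ts p_\tau(y \mid x) \ts f_t(y) \ts \dd y \ts \dd x,
\end{equation*}
while embedding $f_t$ first, writing $\ebd[k] f_t = \int k(y, \cdot) \ts f_t(y) \ts \dd y$, and pulling the bounded linear operator $\ko[\ebd]$ through the integral gives
\begin{equation*}
    \ko[\ebd](\ebd[k] f_t) = \int \ko[\ebd] \ts k(y, \cdot) \ts f_t(y) \ts \dd y.
\end{equation*}

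The crux is to identify $\ko[\ebd] \ts k(y, \cdot)$. Unlike $\pf[\ebd] \ts k(x, \cdot)$, which the Perron--Frobenius argument recognised as the conditional mean embedding $\mathbb{E}_{\scriptscriptstyle Y \mid x}[\phi(Y) \mid X = x]$, the element $\ko[\ebd] \ts k(y, \cdot) = \cov[XY] \ts \cov[XX]^{-1} \ts k(y, \cdot)$ admits no equally direct probabilistic interpretation, so I would instead go through the adjoint. Since $\ko[\ebd]^{\ast} = \cov[XX]^{-1} \ts \cov[YX] = \pf[k]$ by Corollary~\ref{col:PF derivation}, for any $g \in \mathbb{H}$ the reproducing property gives
\begin{align*}
    \innerprod{\ko[\ebd] \ts k(y, \cdot)}{g}_\mathbb{H}
        &= \innerprod{k(y, \cdot)}{\pf[k] \ts g}_\mathbb{H}
        = (\pf[k] \ts g)(y)
        = \int p_\tau(y \mid x) \ts g(x) \ts \dd x \\
        &= \Big\langle \int p_\tau(y \mid x) \ts k(x, \cdot) \ts \dd x, \, g \Big\rangle_\mathbb{H}.
\end{align*}
As $g$ ranges over $\mathbb{H}$, this forces $\ko[\ebd] \ts k(y, \cdot) = \int p_\tau(y \mid x) \ts k(x, \cdot) \ts \dd x$. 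Substituting back and swapping the two integrals by Fubini recovers exactly the expression for $\ebd[k](\ko f_t)$, closing the diagram.

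The main obstacle is precisely that identification step. The adjoint route relies on the no-regularization hypothesis stated in the proposition so that $\cov[XX]^{-1}$ is well defined on the relevant range, and on Corollary~\ref{col:PF derivation}(i), which in turn implicitly treats the reference density underlying $\cov[XX]$ as a uniform measure consistent with the Lebesgue integration used by $\ebd[k]$; for a non-uniform reference one would inherit an extra Radon--Nikodym factor $p_{\scriptscriptstyle \inspace}(x)/p_{\scriptscriptstyle \inspace}(y)$ via Proposition~\ref{prop:transfer-rkhs} that breaks the desired cancellation. The final Fubini swap is routine under the boundedness assumption on $k$ and $f_t \in L^\infty$.
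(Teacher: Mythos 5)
Your proof is correct, and it fills in exactly the argument the paper leaves implicit: the paper gives no explicit proof of this proposition, stating only that it is ``similar to the proof of Proposition~\ref{pro:Commutativity Perron-Frobenius}.'' The structure you use (evaluate both legs as Bochner integrals, pull the operator through the integral, identify the image of a kernel section, swap integrals by Fubini) is precisely that template. The one place where you genuinely deviate is the identification of $ \ko[\ebd] \ts k(y,\cdot) $: the Perron--Frobenius proof reads off $ \pf[\ebd] \ts k(x,\cdot) = \mathbb{E}_{\scriptscriptstyle Y \mid x}[\phi(Y) \mid X = x] $ directly from the conditional mean embedding, whereas you route through the adjoint $ \ko[\ebd]^{\ast} = \pf[k] $ and Proposition~\ref{prop:transfer-rkhs} to get $ \ko[\ebd] \ts k(y,\cdot) = \int p_\tau(y \mid x) \ts k(x,\cdot) \ts \dd x $. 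This detour is well justified: the direct analogue would be to note that under stationarity $ \cov[XX] = \cov[YY] $, so $ \ko[\ebd] = \cov[XY]\cov[YY]^{-1} $ is the backward conditional mean embedding and $ \ko[\ebd] \ts k(y,\cdot) = \mathbb{E}[\phi(X) \mid Y = y] $, which by Bayes' rule reduces to the same expression up to the density ratio you identify. Your closing observation about the Radon--Nikodym factor $ p_{\scriptscriptstyle\inspace}(x)/p_{\scriptscriptstyle\inspace}(y) $ is a genuine and correct caveat: the stated commutativity (with $ \ebd[k] $ taken with respect to Lebesgue measure and $ \ko $ the Koopman operator) does require the reference density underlying $ \cov[XX] $ to be uniform, exactly as in Corollary~\ref{col:PF derivation}(i); the paper is silent on this, and your proof makes the hypothesis explicit rather than introducing a gap.
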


The proof is similar to the proof of Proposition~\ref{pro:Commutativity Perron-Frobenius}. As for $ \pf $, embedding an eigenfunction of $ \ko $ results in an eigenfunction of $ \ko[\ebd] $. We assume now that the feature space is  finite-dimensional and $ \cov[XX]^{-1} $ exists, i.e., we set $ \varepsilon = 0 $. For the kernel Koopman operator $ \ko[k] $, the commutativity can then be seen as follows: Given an observable $ f_t \in \mathbb{H} $, then $ \ko[k] f_t = \cov[XX]^{-1} \ts \cov[XY] f_t $, while the embedding of $ f_t $ results in $ \nu_t = \cov[XX] f_t $. Applying $ \ko[\ebd] $, this results in
\begin{equation*}
    \ko[\ebd] (\ebd[k] f_t)
        = \cov[XY] \ts \cov[XX]^{-1} \ts \cov[XX] f_t
        = \cov[XX] \ts \cov[XX]^{-1} \ts \cov[XY] f_t
        = \ebd[k] (\ko[k] f_t).
\end{equation*}

\begin{proposition} \label{pro:Estimation embedded Koopman}
The empirical estimate $ \eko[\ebd] $ of the embedded Koopman operator $ \ko[\ebd] $ can be written as $ \eko[\ebd] = \Phi A \Psi^\top$, where $ A = \gram[YX] (\gram[XX] + n \varepsilon \id)^{-1} \gram[YX]^{-1} $. We assume again that $ \gram[YX] $ is invertible.
\end{proposition}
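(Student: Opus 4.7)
The plan is to mirror the argument used for $\epf[k]$ in Proposition~\ref{pro:Estimation Perron-Frobenius}. First I would substitute the empirical estimators $\ecov[XY] = \frac{1}{n}\Phi\Psi^\top$ and $\ecov[XX] = \frac{1}{n}\Phi\Phi^\top$ into the definition of $\ko[\ebd]$, which gives
\[
    \eko[\ebd] = \left(\tfrac{1}{n}\Phi\Psi^\top\right)\!\left(\tfrac{1}{n}\Phi\Phi^\top + \varepsilon \idop\right)^{-1} = \Phi\Psi^\top\bigl(\Phi\Phi^\top + n\varepsilon \idop\bigr)^{-1}.
\]
I would then impose the ansatz $\eko[\ebd] = \Phi A \Psi^\top$ and try to solve for $A \in \R^{n\times n}$.

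Next, to turn this operator identity into a finite-dimensional matrix equation, I would multiply both sides on the right by $\Phi$. The right-hand side becomes $\Phi A \gram[YX]$, while on the left the push-through identity $(\Phi\Phi^\top + n\varepsilon\idop)^{-1}\Phi = \Phi(\Phi^\top\Phi + n\varepsilon \id)^{-1}$---the same identity invoked in the proof of Proposition~\ref{pro:Estimation Perron-Frobenius}---turns the expression into $\Phi \gram[YX] (\gram[XX] + n\varepsilon \id)^{-1}$. Stripping the common leftmost $\Phi$ (either by multiplying by $\Phi^\top$ from the left as in the Perron--Frobenius proof, or by matching coordinate representations in the span of the columns of $\Phi$) reduces the problem to $A\,\gram[YX] = \gram[YX](\gram[XX] + n\varepsilon \id)^{-1}$, and multiplying on the right by $\gram[YX]^{-1}$ yields the stated formula.

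The main subtlety, and the reason for the explicit hypothesis in the statement, is the invertibility of $\gram[YX]$: this is a genericity assumption on the time-lagged feature vectors, exactly parallel to the invertibility hypothesis on $\gram[XY]$ in Proposition~\ref{pro:Estimation Perron-Frobenius}. Beyond that, every step is a routine algebraic manipulation, so the only nontrivial ingredient is the push-through identity between $(\Phi\Phi^\top + n\varepsilon \idop)^{-1}$ and $(\Phi^\top\Phi + n\varepsilon \id)^{-1}$; no new analytic input is needed.
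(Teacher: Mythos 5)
Your proposal is correct and matches the paper's intent exactly: the paper itself only says the proof is analogous to that of Proposition~\ref{pro:Estimation Perron-Frobenius}, and you carry out precisely that analogue---substitute the empirical covariance operators, posit the ansatz $\eko[\ebd] = \Phi A \Psi^\top$, apply the push-through identity $(\Phi\Phi^\top + n\varepsilon\idop)^{-1}\Phi = \Phi(\gram[XX] + n\varepsilon\id)^{-1}$, and solve $A\,\gram[YX] = \gram[YX](\gram[XX]+n\varepsilon\id)^{-1}$ using the assumed invertibility of $\gram[YX]$. No discrepancy with the paper's argument.
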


The proof is analogous to the proof of Proposition~\ref{pro:Estimation Perron-Frobenius}.

\begin{example}
Let us consider again the system from Example~\ref{ex:Simple system} whose eigenfunctions $ \varphi_1 $, $ \varphi_2 $, and $ \varphi_3 $ we estimated numerically in Example~\ref{ex:EDMD simple system}. Computing the corresponding embedded eigenfunctions analytically, we obtain the (properly rescaled) functions $ \nu_1 $, $ \nu_2 $, and $ \nu_3 $ and associated vector representations $ w_1 $, $ w_2 $, and $ w_3 $, given by
\begin{equation*}
    \begin{array}{l@{\quad}l@{\,}cccccc@{\,}l@{\quad}l}
        \lambda_1 = 1.0,
            & w_1 = [& 3 & 0 & 0 & 4 & 0 & 4 & ]^\top,
            & \nu_1(x) = 3 + 4 \ts x_1^2 + 4 \ts x_2^2, \\
        \lambda_2 = 0.8,
            & w_2 = [& 0 & 1 & 0 & 0 & 0 & 0 & ]^\top,
            & \nu_2(x) = \sqrt{2}\ts x_1, \\
        \lambda_3 = 0.7,
            & w_3 = [& 1 & 0 & \sqrt{2} & \frac{12}{5} & 0 & \frac{4}{3} & ]^\top,
            & \nu_3(x) = 1 + 2 \ts z_2 + \frac{12}{5} z_1^2 + \frac{4}{3} z_2^2. 
    \end{array}
\end{equation*}
For $ \varepsilon = 0 $, the vectors $ w_1 $, $ w_2 $, and $ w_3 $ are indeed eigenvectors of the matrix $ \eko[\ebd] = \ecov[XY] \ts \ecov[XX]^{-1} $ corresponding to the eigenvalues $ \lambda_1 $, $ \lambda_2 $, and $ \lambda_3 $. \exampleSymbol
\end{example}

\subsection{Consistency and Convergence Rate}
\label{sec:consistency}

In this section we establish the consistency and convergence rate of the proposed estimators. Note that we only emphasize on bounding the estimation error and leave a complete convergence analysis of the kernel transfer operators to future work.

\begin{theorem}\label{thm:consistency-results}
  Let $\mathcal{S}$ be either of $\pf[\ebd], \pf[k], \ko[\ebd]$ and $\ko[k]$. Assume that the RKHS $\mathbb{H}$  it acts upon is separable and endowed with a measurable kernel $k$. For a (linear) bounded operator $B:\mathbb{H}\to\mathbb{H}$, we denote an operator norm on $\mathbb{H}$ by $\|B\| := \sup_{\|f\|=1}\|Bf\|$. Suppose that $\mathbb{E}_{X_t}[k(X_t,X_t)] < \infty$ and $\mathbb{E}_{X_{t+\tau}}[k(X_{t+\tau},X_{t+\tau})] < \infty$. Then, for all positive regularization parameters $\varepsilon$, the empirical estimate $\widehat{\mathcal{S}} $ converges to $\mathcal{S}$  at rate
  \begin{equation}
    \left\| \widehat{\mathcal{S}} - \mathcal{S}\right\| = O_p\left(n^{-1/2}\varepsilon^{-1}\right), \qquad (n\to\infty).
  \end{equation}
\end{theorem}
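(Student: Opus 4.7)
The plan is to establish the bound for the embedded Perron--Frobenius operator $\pf[\ebd] = \cov[YX](\cov[XX] + \varepsilon \idop)^{-1}$ in detail and note that the three other cases are entirely analogous: for $\ko[\ebd]$ one replaces $\cov[YX]$ by its adjoint $\cov[XY]$, while for $\pf[k]$ and $\ko[k]$ the regularised resolvent sits on the left of the cross-covariance, mirroring the same calculation. The core device is the usual add-and-subtract decomposition combined with the resolvent identity. Writing $T = (\cov[XX] + \varepsilon \idop)^{-1}$ and $\widehat T = (\ecov[XX] + \varepsilon \idop)^{-1}$, and using $\widehat T - T = \widehat T(\cov[XX] - \ecov[XX])T$, one obtains
\begin{equation*}
\widehat{\mathcal S} - \mathcal S \;=\; (\ecov[YX] - \cov[YX])\,\widehat T \;+\; \cov[YX]\,\widehat T\,(\cov[XX] - \ecov[XX])\,T.
\end{equation*}

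Two ingredients then reduce the problem to bookkeeping. First, positivity of the (empirical) covariance gives the trivial resolvent bound $\|T\|, \|\widehat T\| \le \varepsilon^{-1}$. Second, under the finite-trace hypotheses $\mathbb E[k(X_t,X_t)]<\infty$ and $\mathbb E[k(X_{t+\tau},X_{t+\tau})]<\infty$ and separability of $\mathbb H$, the random operators $\phi(X_t)\otimes\phi(X_t)$ and $\phi(X_{t+\tau})\otimes\phi(X_t)$ are Bochner-integrable Hilbert--Schmidt operators with finite second moment, so a Chebyshev-type argument in the Hilbert--Schmidt norm (of the kind used in the consistency proofs of Fukumizu et al.\ 2007 or Caponnetto--De Vito 2007) yields the parametric rates
\begin{equation*}
\|\ecov[XX] - \cov[XX]\| = O_p(n^{-1/2}), \qquad \|\ecov[YX] - \cov[YX]\| = O_p(n^{-1/2}).
\end{equation*}
I would cite these facts rather than reproduce the empirical-process argument.

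Combining the pieces, the first summand is immediately bounded by $\varepsilon^{-1}\|\ecov[YX] - \cov[YX]\| = O_p(n^{-1/2}\varepsilon^{-1})$, already matching the claimed rate. The main obstacle is the second summand: a naive triangle-inequality bound produces $\|\cov[YX]\|\,\varepsilon^{-2}\,\|\cov[XX] - \ecov[XX]\| = O_p(n^{-1/2}\varepsilon^{-2})$, one factor of $\varepsilon$ too crude. I would close this gap by restricting to the high-probability event $\{\|\ecov[XX]-\cov[XX]\| \le \varepsilon/2\}$ (whose probability tends to $1$), on which a Neumann-series argument shows that $\widehat T$ is a bounded multiplicative perturbation of $T$, and then bounding $\|\cov[YX]\,\widehat T\|$ by a constant times $\|\cov[YX]\,T\|$. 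The latter is controlled uniformly in $\varepsilon$ under the implicit regularity hypothesis that $\cov[YX]$ factors through a power of $\cov[XX]$---the analogue for $\mathcal S$ of the CME condition of Proposition~\ref{thm:Fukumizu04}, which is already needed for the population operator $\mathcal S$ to be a well-defined bounded operator on $\mathbb H$. With this refinement the second summand is also $O_p(n^{-1/2}\varepsilon^{-1})$, and summing the two contributions yields the theorem.
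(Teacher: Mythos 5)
Your argument follows the paper's proof almost step for step up to the decisive estimate: the same add--and--subtract decomposition, the same resolvent identity $\widehat{T}-T=\widehat{T}\,(\cov[XX]-\ecov[XX])\,T$, the same crude bound $\|(\ecov[XX]+\varepsilon\idop)^{-1}\|\le\varepsilon^{-1}$, and the same appeal to $O_p(n^{-1/2})$ concentration of the empirical (cross-)covariance operators in Hilbert--Schmidt norm (the paper cites Lemma~4 of \citealt{Fukumizu07:SCK} rather than Caponnetto--De Vito, but the input is the same). The first summand is treated identically in both. Where you part ways is the second summand, and your instinct there is sharper than the paper's: the paper simply bounds its square by $\|\cov[YX]\|^2\,\varepsilon^{-4}\,\|\cov[XX]-\ecov[XX]\|^2=O_p\bigl(n^{-1}\varepsilon^{-4}\bigr)$ and then asserts the stated rate; taken literally, the displayed estimates in the paper only deliver $O_p\bigl(n^{-1/2}\varepsilon^{-2}\bigr)$ for $\|\widehat{\mathcal{S}}-\mathcal{S}\|$ --- precisely the one-power-of-$\varepsilon$ deficit you identified.

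Your proposed repair is mechanically sound as far as it goes: on the event $\{\|\ecov[XX]-\cov[XX]\|\le\varepsilon/2\}$ one indeed has $\widehat{T}=T\,(\idop+(\ecov[XX]-\cov[XX])T)^{-1}$ and hence $\|\cov[YX]\widehat{T}\|\le 2\,\|\cov[YX]T\|$ by a Neumann series. The catch is the final step, uniform-in-$\varepsilon$ boundedness of $\|\cov[YX](\cov[XX]+\varepsilon\idop)^{-1}\|$. Baker's factorization $\cov[YX]=\cov[YY]^{1/2}V\cov[XX]^{1/2}$ with $\|V\|\le 1$ only yields $\|\cov[YX](\cov[XX]+\varepsilon\idop)^{-1}\|=O(\varepsilon^{-1/2})$, which would leave the second summand at $O_p\bigl(n^{-1/2}\varepsilon^{-3/2}\bigr)$; getting a constant requires a genuine source condition such as $\cov[YX]=A\,\cov[XX]$ with $A$ bounded. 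That condition is not among the theorem's hypotheses, and --- since the paper defines the population operators with the regularized inverse already built in --- it is not forced by well-definedness of $\mathcal{S}$ either, so your claim that it is ``already needed'' does not hold in the paper's setting. In short: your write-up, with the extra assumption stated explicitly, proves the claimed $\varepsilon^{-1}$ rate, which is more than the paper's own argument establishes; without that assumption you land at $\varepsilon^{-3/2}$, and the paper's argument as written lands at $\varepsilon^{-2}$. The reduction of the remaining three operators to the $\pf[\ebd]$ case via adjointness matches the paper.
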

 
\begin{proof}
We will first show the result for $\mathcal{S} = \pf[\ebd]$. By the definitions of $\pf[\ebd]$ and $\epf[\ebd]$ and the triangle inequality, we have
\begin{eqnarray}
  \left\| \epf[\ebd] - \pf[\ebd]\right\|^2 &=& \left\|\ecov[YX](\ecov[XX] + \varepsilon\mathcal{I})^{-1} - \cov[YX](\cov[XX] + \varepsilon\mathcal{I})^{-1}\right\|^2 \nonumber \\
  &=& \Big\|\ecov[YX](\ecov[XX] + \varepsilon\mathcal{I})^{-1} - \cov[YX](\ecov[XX] + \varepsilon\mathcal{I})^{-1} \nonumber \\
  && + \cov[YX](\ecov[XX] + \varepsilon\mathcal{I})^{-1} - \cov[YX](\cov[XX] + \varepsilon\mathcal{I})^{-1}\Big\|^2 \nonumber \\
  &\leq& \left\|\ecov[YX](\ecov[XX] + \varepsilon\mathcal{I})^{-1} - \cov[YX](\ecov[XX] + \varepsilon\mathcal{I})^{-1} \right\|^2 \nonumber \\
  && + \left\|\cov[YX](\ecov[XX] + \varepsilon\mathcal{I})^{-1} - \cov[YX](\cov[XX] + \varepsilon\mathcal{I})^{-1}\right\|^2 \label{eq:bound1}.
\end{eqnarray} 
Due to the separability of $\mathbb{H}$, there exists a Hilbert basis $(e_j)_{j\geq 1}$ of $\mathbb{H}$. Let $\langle A,B\rangle_{\text{HS}} = \sum_{j\geq 1}\langle Ae_j,Be_j\rangle$ be the scalar product associated with the space of Hilbert--Schmidt operators on $\mathbb{H}$ and $\| \cdot \|_{\text{HS}}$ the corresponding norm. Since both $\cov[XX]$ and $\cov[YX]$ are Hilbert--Schmidt operators on $\mathbb{H}$ \citep[Section 3.2]{MFSS16}, we can bound the first term on the right-hand side of \eqref{eq:bound1} as follows:
\begin{eqnarray*}
  \left\|\ecov[YX](\ecov[XX] + \varepsilon\mathcal{I})^{-1} - \cov[YX](\ecov[XX] + \varepsilon\mathcal{I})^{-1} \right\|^2 &\leq& \left\|\ecov[YX] - \cov[YX] \right\|^2\left\|(\ecov[XX] + \varepsilon\mathcal{I})^{-1} \right\|^2 \\
  &\leq& \left\|\ecov[YX] - \cov[YX] \right\|_{\text{HS}}^2\left\|(\ecov[XX] + \varepsilon\mathcal{I})^{-1} \right\|^2.
\end{eqnarray*}
For the second inequality, we used the fact that $\|B\| \leq \|B\|_{\text{HS}}$ for a Hilbert--Schmidt operator $B$. Then, it follows from \citet[Lemma 4]{Fukumizu07:SCK} that $\|\ecov[YX] - \cov[YX] \|_{\text{HS}}^2 = O_p(1/n)$. Moreover, we have $\|(\ecov[XX] + \varepsilon\mathcal{I})^{-1} \|^2 \leq 1/\varepsilon^2$. Hence, it follows that
\begin{equation}\label{eq:first-term}
  \left\|\ecov[YX] - \cov[YX] \right\|_{\text{HS}}^2\left\|(\ecov[XX] + \varepsilon\mathcal{I})^{-1} \right\|^2 = O_p\left(\frac{1}{n\varepsilon^2}\right), \quad (n\to\infty).
\end{equation}
Next, we bound the second term on the right-hand side of \eqref{eq:bound1}. Likewise, it can be bounded from above by $\|\cov[YX]\|^2\|(\ecov[XX] + \varepsilon\mathcal{I})^{-1} - (\cov[XX] + \varepsilon\mathcal{I})^{-1}\|^2$. By using the fact that $B^{-1} - D^{-1} = B^{-1}(D-B)D^{-1}$ holds for any invertible operators $B$ and $D$, we have
\begin{eqnarray} 
  \left\|(\ecov[XX] + \varepsilon\mathcal{I})^{-1} - (\cov[XX] + \varepsilon\mathcal{I})^{-1}\right\|^2 &=& \left\|(\ecov[XX] + \varepsilon\mathcal{I})^{-1}(\cov[XX]-\ecov[XX])(\cov[XX] + \varepsilon\mathcal{I})^{-1}\right\|^2 \nonumber \\
  &\leq& \frac{1}{\varepsilon^4} \left\|\cov[XX]-\ecov[XX]\right\|^2 = O_p\left(\frac{1}{n\varepsilon^4} \right), \quad (n\to\infty), \label{eq:second-term} 
\end{eqnarray}
where the last equality again follows from the fact that $\|B\| \leq \|B\|_{\text{HS}}$ and \citet[Lemma 4]{Fukumizu07:SCK}. Combining \eqref{eq:first-term} and \eqref{eq:second-term} completes the proof for $\pf[\ebd]$. This implies convergence of $\ko[k]$ from the fact that it is the adjoint of $\pf[\ebd]$ (see \eqref{eq:koopman-adjoint}). Furthermore, $\pf[k]$ and $\ko[\ebd]$ are adjoint to each other and their empirical estimates can be expressed as $(\ecov[XX] + \varepsilon\mathcal{I})^{-1}\ecov[YX]$ and $\ecov[XY](\ecov[XX] + \varepsilon\mathcal{I})^{-1}$, respectively, allowing for an analogous derivation.
\end{proof}

\begin{remark} Two remarks are in order:
\begin{enumerate}[label=(\roman*), wide, itemindent=0.5em]
\item For an approximation error to vanish, $\varepsilon$ must also decay to zero. It follows from Theorem \ref{thm:consistency-results} that the decay rate must be slower than $1/\sqrt{n}$. Setting $\varepsilon = n^{-\alpha}$ where $\alpha < 1/2$ yields an overall rate of $n^{\alpha-1/2}$. We leave detailed analysis of the approximation error to future work.

\item The result differs from existing convergence results of the conditional mean embedding \citep{SHSF09,Grunewalder12:LGBPP,Song2013} in that we consider convergence in an operator norm rather than an RKHS norm. 

\end{enumerate}
\end{remark}

Theorem \ref{thm:consistency-results} shows that the kernel transfer operators can be estimated \emph{consistently} and at a reasonably fast rate from the empirical data without requiring parametric assumptions about the underlying dynamical system.

\subsection{Relationships with Other Methods}
\label{ssec:Relationships with other methods}

There are several existing methods such as \emph{time-lagged independent component analysis} (TICA) \citep{MS94, PPGDN13}, \emph{dynamic mode decomposition} (DMD) \citep{Schmid10, TRLBK14}, and their respective generalizations---the aforementioned VAC and EDMD---to approximate transfer operators and their eigenvalues, eigenfunctions, and eigenmodes. Although developed independently from each other, these methods are strongly related as shown in \citet{KNKWKSN18}. Our methods subsume existing ones and thereby provide a unified framework for transfer operator approximation using RKHS theory. Recently, a data-driven approach for the spectral analysis of measure-preserving ergodic dynamical systems using RKHS theory was also proposed in \citet{GDS18}.

\subsubsection{TICA and DMD}

TICA can be used to separate superimposed signals \citep{MS94}, solving the so-called \emph{blind source separation} problem, and also for dimensionality reduction \citep{PPGDN13}, by projecting a high-dimensional signal onto the main TICA coordinates (see Section~\ref{sec:experiments} for an example). The method aims to find the time-lagged independent components that are uncorrelated and maximize the autocovariances at lag time $ \tau $. Given again training data $ \mathbb{D}_{\scriptscriptstyle XY} = \{(x_1, y_1), \dots, (x_n, y_n)\} $, where $x_i = X_{t_i}$ and $y_i = X_{t_i+\tau}$, we define the associated data matrices $ \mathbf{X}, \mathbf{Y} \in \R^{d\times n}$ by
\begin{equation*} \label{eq:data matrices}
    \mathbf{X} =
    \begin{bmatrix}
        x_1 & \cdots & x_n
    \end{bmatrix}
    \quad \text{and} \quad
    \mathbf{Y} =
    \begin{bmatrix}
        y_1 & \cdots & y_n
    \end{bmatrix}.
\end{equation*}
By setting $ k(x,x^\prime) = x^\top x^\prime $ and $ l(y,y^\prime) = y^\top y^\prime $, the
eigenvalue problem for the Koopman operator reduces to the standard eigenvalue problem $ \ecov[XX]^{-1} \ts \ecov[XY] \ts \xi = \lambda \ts \xi $, where $ \ecov[XX] $ and $ \ecov[XY] $ denote the covariance and cross-covariance matrices, respectively, defined by $ \ecov[XX] = \frac{1}{n} \mathbf{X} \mathbf{X}^\top $ and $ \ecov[XY] = \frac{1}{n} \mathbf{X} \mathbf{Y}^\top $. The resulting eigenvectors are defined to be the TICA coordinates.

DMD is frequently used for the analysis of high-dimensional fluid flow problems \citep{Schmid10}. The DMD modes correspond to coherent structures in these flows. The derivation is based on the least-squares minimization problem $ \norm{\mathbf{Y} - M \mathbf{X}}_F $, whose solution is given by
\begin{equation*}
    M = \mathbf{Y} \mathbf{X}^+
      = \big(\mathbf{Y} \mathbf{X}^\top\big)\big(\mathbf{X} \mathbf{X}^\top\big)^{-1}
      = \ecov[YX] \ts \ecov[XX]^{-1}.
\end{equation*}
Eigenvectors of this matrix are then called DMD modes. Equivalently, the DMD modes can be interpreted as the left eigenvectors of the TICA matrix $ \ecov[XX]^{-1} \ts \ecov[XY] $. More details on the relationships between TICA and DMD can be found in \citet{KNKWKSN18}. As shown above, both TICA and DMD can be obtained as special cases of our algorithms.

\subsubsection{VAC and EDMD}

For a given set of basis functions $ \phi_1, \dots, \phi_r $, we define the vector-valued function $ \phi = [\phi_1, \dots, \phi_r]^\top \colon \R^d \to \R^r $. In the context of the kernel-based methods introduced above, the function $ \phi $ corresponds to an explicitly defined feature map. This results in the feature matrices $ \Phi, \Psi \in \R^{r \times n} $, given by
\begin{equation*}
    \Phi =
    \begin{bmatrix}
        \phi(x_1) & \cdots & \phi(x_n)
    \end{bmatrix}
    \quad \text{and} \quad
    \Psi =
    \begin{bmatrix}
        \phi(y_1) & \cdots & \phi(y_n)
    \end{bmatrix}.
\end{equation*}
VAC and EDMD, which are equivalent for reversible dynamical systems, can be understood as nonlinear extensions of TICA and DMD, respectively. Both methods utilize the transformed data matrices $ \Phi $ and $ \Psi $ for an explicitly given set of basis functions. VAC uses the matrix $ \ecov[XX]^{-1} \ts \ecov[XY] $ as an approximation of $ \mathcal{T} $ (which is equivalent to $ \ko $ for a reversible system) to compute eigenfunctions. Similarly, EDMD considers the matrix $ \ecov[YX] \ts \ecov[XX]^{-1} $, which can be interpreted as a least-square approximation of the Koopman operator using the transformed data matrices. (In the same way, we obtain $ \ecov[XY] \ts \ecov[XX]^{-1} $ for the Perron--Frobenius operator.) By defining the kernels $ k $ and $ l $ explicitly as $ k(x, x^\prime) = \phi(x)^\top \phi(x^\prime)$ and $ l(y, y^\prime) = \phi(y)^\top \phi(y^\prime) $ for some finite-dimensional feature spaces $ \mathbb{H} $ and $ \mathbb{G} $, we can also see the close relationship between the methods described in this paper and VAC and EDMD. Given a finite-dimensional feature space, $\ecov[XX] = \frac{1}{n}\Phi\Phi^\top$ and $\ecov[YX] = \frac{1}{n}\Psi\Phi^\top$ can be computed explicitly. In \citet{BGH15}, smooth orthogonal basis functions $ \phi_i $ are obtained by using diffusion maps. Then, the Galerkin projection coefficients are computed as $ \innerprod{\phi_\ell}{e^{\tau \mathcal{L}} \phi_{\ell'}} \approx \sum_{i=1}^n \phi_\ell(x_i) \ts \phi_{\ell'}(y_i) $, where $ \mathcal{L} $ is the generator of a drift-diffusion process. This corresponds to the matrix $ \ecov[XY] $ while $ \ecov[XX] $ in this case is the identity matrix. While the formulas are related, their representation is valid in the $ L_2 $-sense and does not necessitate any RKHS representation.

\subsubsection{Kernel TICA and Kernel EDMD}

The advantage of our method compared to VAC and EDMD is that the eigenvalue problem can be expressed entirely in terms of the Gram matrices $ \gram[XX] $, $ \gram[XY] $, $ \gram[YX] $, and $ \gram[YY] $. The transformed data matrices $ \Phi $ and $ \Psi $ need not be computed explicitly. This also allows us to work implicitly with infinite-dimensional feature spaces. Kernel-based variants, based on algebraic transformations of the conventional counterparts, of TICA and EDMD have also been proposed in \citet{SP15, WRK15} and DMD with reproducing kernels in \citet{Kawa16}. Although kernel TICA and kernel EDMD are generalizations of different methods---TICA is related to DMD and VAC to EDMD---, the resulting methods are strongly related again. In \citet{SP15}, conventional TICA is first implicitly extended to VAC and then to kernel TICA, whereas the derivation of kernel EDMD explicitly uses the EDMD feature space representation.

\subsubsection{Conditional Mean Embedding}

As mentioned earlier, the embedded Perron--Frobenius operator $\mathcal{P}_{\mathcal{E}} = \cov[YX]\cov[XX]^{-1}$ has indeed the same form as the conditional mean embedding $\mathcal{U}_{Y|X}$ of $\mathbb{P}(Y\,|\, X)$ which---in the context of this work---describes the dynamics of the system. The eigendecomposition presented in Section \ref{sec:eigendecomposition} for this operator may therefore be viewed in a similar vein as kernel PCA \citep{Scholkopf98:KPCA} for conditional distributions. Moreover, it follows from \eqref{eq:koopman-adjoint} that $\mathcal{P}_{\mathcal{E}}$ and $\mathcal{K}_k$ are adjoint to each other. Hence, their eigenvalues and eigenfunctions are related.

\section{Experiments}
\label{sec:experiments}

In what follows, we present several experiments that demonstrate the benefits of kernel transfer operators in exploratory studies of non-linear dynamical systems. We focus on applications in molecular dynamics, time-series analysis, and text analysis. The experiments were originally performed using Matlab, but most of the methods have been reimplemented in Python and are available at \url{https://github.com/sklus/d3s/}.

\subsection{Molecular Dynamics}

In this section, we apply the proposed techniques to extract meta-stable sets and to reduce the dimension of time-series data. More complex molecular dynamics examples can be found in \citet{KBSS18}.

\paragraph{Meta-stable sets.}

As a first example, let us illustrate how the eigendecomposition of $\ko[k]$---for an explicitly defined feature space, which corresponds to EDMD as described above---can be used for molecular dynamics applications. We consider a simple multi-well diffusion process given by a stochastic differential equation of the form
\begin{equation*}
    \dd X_t = -\nabla V(X_{t}) \ts \dd t + \sqrt{2 D} \, \dd W_t,
\end{equation*}
where $ V $ is the potential, $D = \beta^{-1}$ again the diffusion coefficient, and $W_t$ a standard Wiener process. The potential, taken from \citet{BKKBDS17} and visualized in Figure~\ref{fig:LemonSlice}\,a, is given by
\begin{equation*}
     V(x) = \cos\left(a \ts \arctan(x_2, x_1)\right) + 10 \left(\sqrt{x_1^2 + x_2^2} - 1\right)^2.
\end{equation*}
We set $ a = 5 $. A particle will typically spend a long time in one of the wells and then jump to one of the adjacent wells. The transitions between the wells are rare events. Thus, this system exhibits metastable behavior and the five metastable sets---which are encoded in the five dominant eigenfunctions of the transfer operators associated with the system---correspond to the five wells of the potential.

\begin{figure}[tbp]
    \centering
    \begin{minipage}{0.49\textwidth}
        \centering
        \subfiguretitle{a)}
        \includegraphics[height=0.27\textheight]{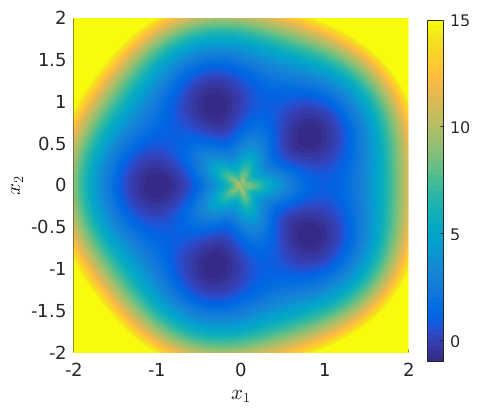}
    \end{minipage}
    \begin{minipage}{0.49\textwidth}
        \centering
        \subfiguretitle{b)}
        \includegraphics[height=0.27\textheight]{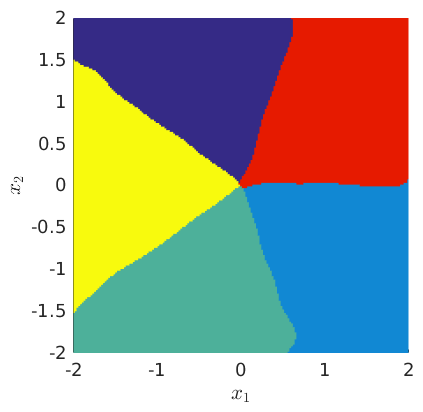}
    \end{minipage}
    \caption{a)~Potential $ V $ associated with the multi-well diffusion process. b)~Partitioning of the state space based on the dominant eigenfunctions of the Koopman operator.}
    \label{fig:LemonSlice}
\end{figure}

We use a $ 50 \times 50 $ box discretization of the domain $ \inspace = [-2, 2] \times [-2, 2] $ to define a basis containing $ 2500 $ radial basis functions $ k_i(x, c_i) = \exp(-\frac{1}{2 \sigma^2}\norm{x - c_i}^2) $ whose centers $ c_i $ are the centers of the boxes. This defines a kernel
\begin{equation*}
    k(x, x^\prime) = \sum_{i=1}^{2500} k_i(x, c_i) \ts k_i(x^\prime, c_i) = \sum_{i=1}^{2500} \exp\left(-\frac{1}{2 \sigma^2} \left(\norm{x - c_i}^2 + \norm{x^\prime - c_i}^2\right)\right).
\end{equation*}
Furthermore, we choose the lag time $ \tau = 0.2 $ and $ \sigma^2 = 0.9 $. We generate $ 250000 $ uniformly distributed test points $ x_i \in \inspace $ and solve the initial value problem with the Euler--Maruyama\footnote{See, e.g., \cite{kloeden2011numerical}.} method to obtain the corresponding $ y_i $ values. We then compute the eigenvalues and eigenfunctions of the Koopman operator $ \ko[k] $. There exist five dominant eigenvalues close to one and then there is a spectral gap between the fifth and sixth eigenvalue. We apply a $k$-means clustering to the dominant eigenfunctions to obtain the partitioning of the domain into the five metastable sets shown in Figure~\ref{fig:LemonSlice}\,b.

\paragraph{Dimensionality reduction and blind source separation.}

Another use case of the methods introduced above is dimensionality reduction. Before methods to compute eigenfunctions of transfer operators such as EDMD or VAC can be applied to high-dimensional systems, the data often needs to be projected onto a lower-dimensional subspace first. This can be accomplished by using TICA. Let us consider the simple data set $ x \in \R^{4 \times 10000} $ shown in Figure~\ref{fig:TICA}\,a. From this data set, we extract $ X = [x_1,\,\dots,\,x_{9999}] $ and $ Y = [x_2,\,\dots,\,x_{10000}] $, where $ x_i $ denotes the $i$th column vector of $ x $. Applying TICA, we see that there are two dominant eigenvalues close to $ 1 $, the other two are close to $ 0 $. This indicates that two of the four variables exhibit metastable behavior. Projecting the data onto the TICA coordinates results in the trajectories shown in Figure~\ref{fig:TICA}\,b. The first two new variables corresponding to the dominant eigenvalues contain the metastability, while the other two variables contain just noise. (In fact, this is how the data set was constructed.) Since we are only interested in the slow metastable dynamics, we can neglect the last two variables and thus reduce the state space. TICA corresponds to approximating the eigenfunctions of the kernel Koopman operator $\ko[k]$ using a linear kernel, see Section~\ref{ssec:Relationships with other methods} for details.

\begin{figure}[tbhp]
    \centering
    \begin{minipage}{0.46\textwidth}
        \centering
        \subfiguretitle{a)}
        \includegraphics[width=\textwidth]{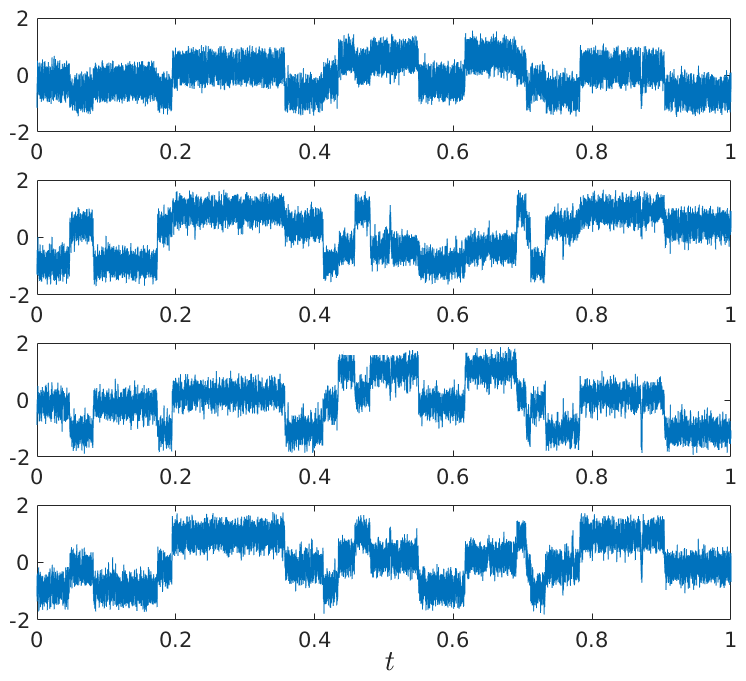}
    \end{minipage}
    \begin{minipage}{0.46\textwidth}
        \centering
        \subfiguretitle{b)}
        \includegraphics[width=\textwidth]{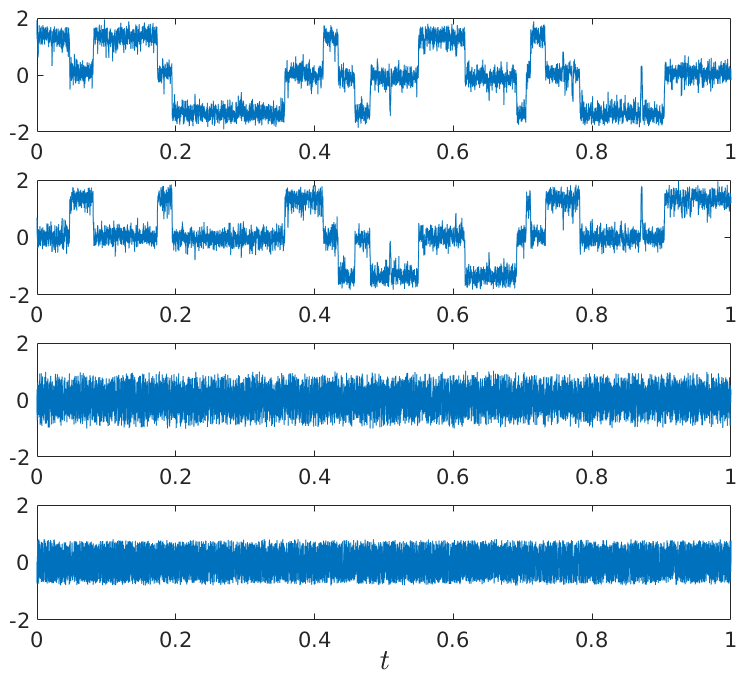}
    \end{minipage}
    \caption{a) Original data set. b) Projection onto the TICA coordinates. Only the first two variables corresponding to the dominant eigenvalues exhibit metastable behavior.}
    \label{fig:TICA}
\end{figure}

\paragraph{$n$-butane.}

Let us now consider a more complex problem, namely the $ n $-butane molecule. It is well-known that the slow dynamics depend mainly on the dihedral angle $ \vartheta $, see Figure~\ref{fig:Butane}. This system was analyzed in \citet{KKS16} using standard EDMD. Since applying EDMD to the full position space is infeasible due to the curse of dimensionality, it was applied to the dihedral angle only. That is, we reduced the originally 42-dimensional (if all atoms are considered) or 12-dimensional (if only the carbon atoms are considered) problem to a one-dimensional problem. However, the slow coordinates are in general unknown a priori and the prime reason why we want to compute eigenfunctions in the first place. Using the kernel-based approaches described above, it is now possible to consider the full state space. That is, we apply our methods directly to the $(x,y,z)$ coordinates of the atoms instead of relying on domain knowledge.

\begin{figure}[htb]
    \centering
    \begin{minipage}[t]{0.49\textwidth}
        \centering
        \subfiguretitle{a)}
        \vspace*{0.8ex}
        \includegraphics[width=0.8\textwidth]{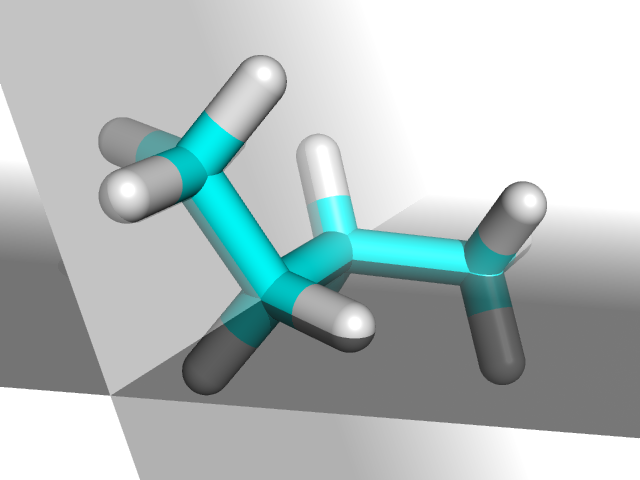}
    \end{minipage}
     \begin{minipage}[t]{0.49\textwidth}
        \centering
        \subfiguretitle{b)}
        \includegraphics[width=0.95\textwidth]{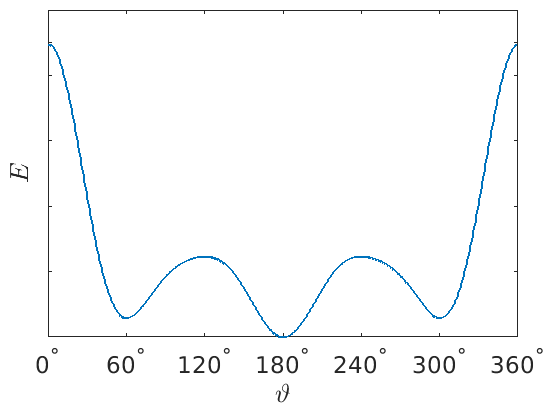}
    \end{minipage}
    \caption{a) Butane molecule and its dihedral angle, given by the angle between the two gray planes. b) Energy $ E $ as a function of the dihedral angle $ \vartheta $. For details, see \cite{KKS16}.}
    \label{fig:Butane}
\end{figure}

We choose a Gaussian kernel with $ \sigma = \frac{1}{\sqrt{2}} $ and select $ 10000 $ data points from one long trajectory computed with Amber~\citep{Amber15}. The lag time $ \tau $ is $ 200 \, \mathrm{fs} $. In order to remove the influence of translations and rotations of the butane molecule, we align the atoms by minimizing the root-mean-square deviation (RMSD) of the four carbon atoms. We also neglect the hydrogen atoms for the subsequent kernel EDMD analysis, the state space $ \mathbb{X} $ is thus 12-dimensional. The second and third eigenfunction of the Perron--Frobenius operator with respect to the invariant density are shown in Figure~\ref{fig:Butane TO}, the first one is omitted since it is, as expected, constant. There is a spectral gap between the third and fourth eigenvalue. We plot the values of the two eigenfunctions evaluated at all data points $ x_i $ as a function of the dihedral angle~$ \vartheta $. The eigenfunctions parametrize the dihedral angle and clearly show the expected three metastable sets around $ \vartheta = 60^\circ $, $ \vartheta = 180^\circ $, and $ \vartheta \approx 300^\circ $, corresponding to the anti and Gauche conformations. The configurations in full space corresponding to these identified metastable sets are shown in Figure~\ref{fig:Butane configurations}. It can clearly be seen that the configurations depend mainly on the dihedral angle.

\begin{figure}[htb]
    \centering
    \begin{minipage}{0.45\textwidth}
        \centering
        \subfiguretitle{a) $ \lambda_2 = 0.996 $}
        \includegraphics[width=0.95\textwidth]{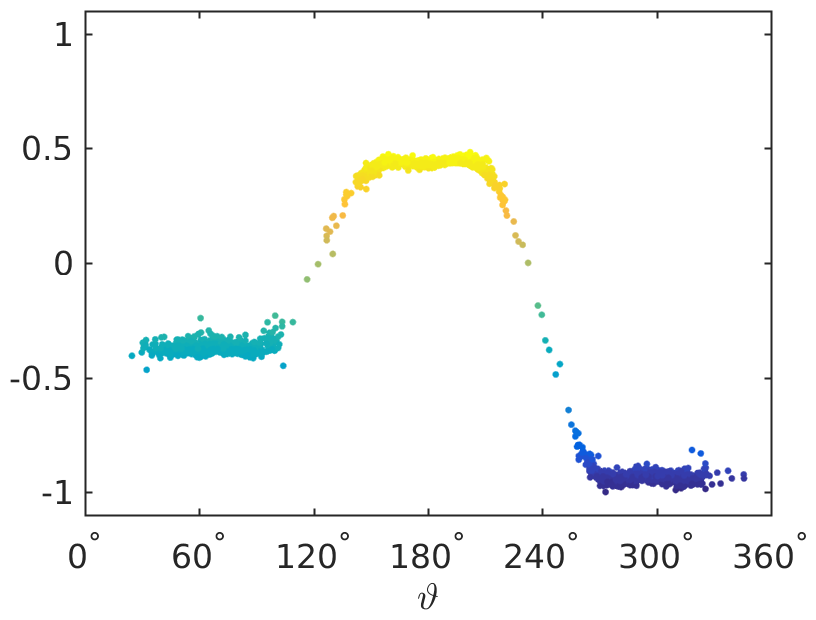}
    \end{minipage}
     \begin{minipage}{0.45\textwidth}
        \centering
        \subfiguretitle{b) $ \lambda_3 = 0.995 $}
        \includegraphics[width=0.95\textwidth]{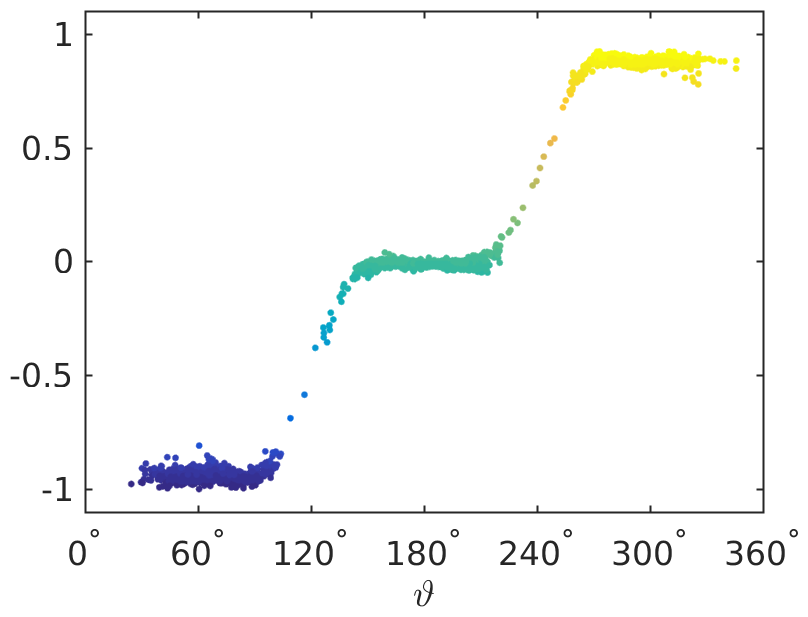}
    \end{minipage}
    \caption{Second and third eigenfunction of the operator $ \mathcal{T}_k $ associated with the butane molecule. The data points $ x_i $ were extracted from one long trajectory. The eigenfunctions are plotted as a function of the dihedral angle $ \vartheta $, which is the known reaction coordinate.}
    \label{fig:Butane TO}
\end{figure}

\begin{figure}[htb]
    \centering
     \begin{minipage}{0.32\textwidth}
        \centering
        \subfiguretitle{a) $ \vartheta \approx 60^\circ $}
        \includegraphics[width=0.95\textwidth]{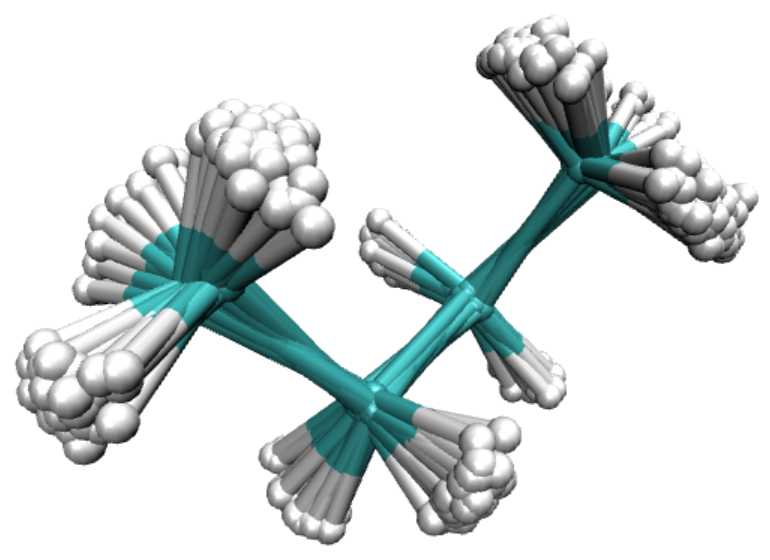}
    \end{minipage}
     \begin{minipage}{0.32\textwidth}
        \centering
        \subfiguretitle{b) $ \vartheta \approx 180^\circ $}
        \includegraphics[width=0.95\textwidth]{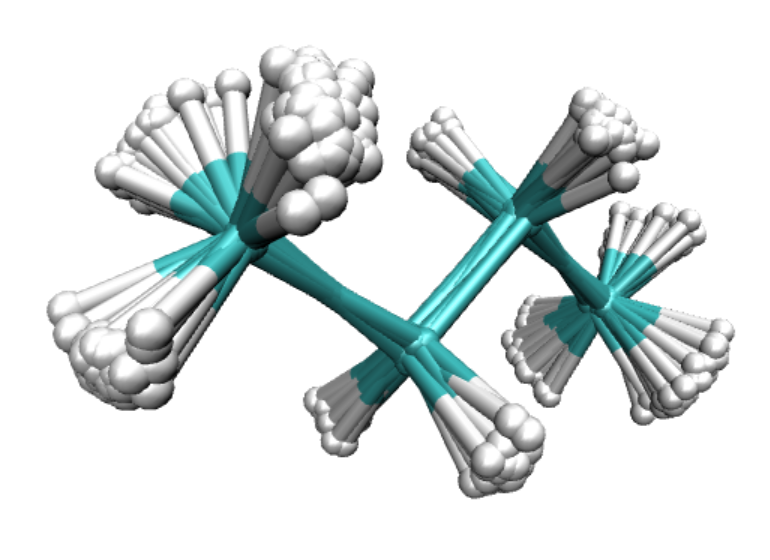}
    \end{minipage}
     \begin{minipage}{0.32\textwidth}
        \centering
        \subfiguretitle{c) $ \vartheta \approx 300^\circ $}
        \includegraphics[width=0.95\textwidth]{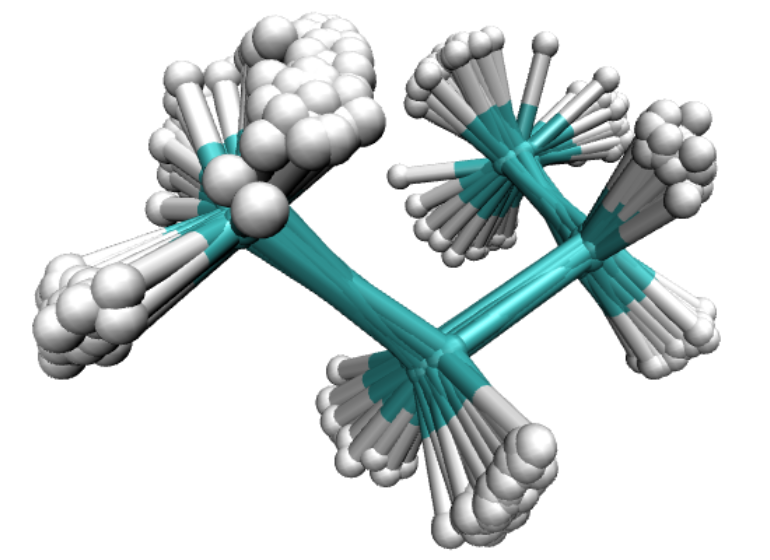}
    \end{minipage}    
    \caption{Overlay of 50 butane configurations taken from the trajectory corresponding to each of the three identified metastable sets.}
    \label{fig:Butane configurations}
\end{figure}

\subsection{Movie Data} 

This section demonstrates a dimension reduction of high-dimensional time-series data using the proposed method. Consider a simple movie showing a moving pendulum.\!\footnote{\href{http://www.youtube.com/watch?v=MpzaCCbX-z4}{ScienceOnline: The Pendulum and Galileo.}} We want to analyze this data set using the eigendecomposition of $\ko[k]$ for a Gaussian kernel $k$ (corresponding to kernel EDMD). To this end, we convert each $ 576 \times 720 $ RGB video frame to a grayscale intensity image---all intensities are between 0 and 1---and define a kernel $ k(x, y) = \exp\left(-\tfrac{1}{2 \sigma^2} \norm{x - y}_F\right) $, with $ \sigma^2 = 500 $. Here, $ \norm{\cdot}_F $ denotes the Frobenius norm. It would also be possible to use the RGB signal directly, e.g., by defining $ k_\text{RGB}(x, y) = k(x_R, y_R) + k(x_G, y_G) + k(x_B, y_B) $, i.e., each primary color is compared separately. The video comprises $ 501 $ frames so that $ X, Y \in \R^{576 \times 720 \times 500} $. That is, the data sets are now tensors of order three. Analogously, we could reshape the snapshot matrices into vectors. We choose the regularization parameter $ \varepsilon = 0.05 $. Thus, for our analysis, we have to solve the eigenvalue problem $(\gram[XX] + \varepsilon \ts \id_n)^{-1} \gram[YX] \ts v = \lambda \ts v $ to obtain eigenfunctions of $\ko[k]$.

\begin{figure}[thbp]
    \centering
    \begin{minipage}{0.32\textwidth}
        \centering
        \subfiguretitle{a) Frame 1}
        \includegraphics[width=0.95\textwidth]{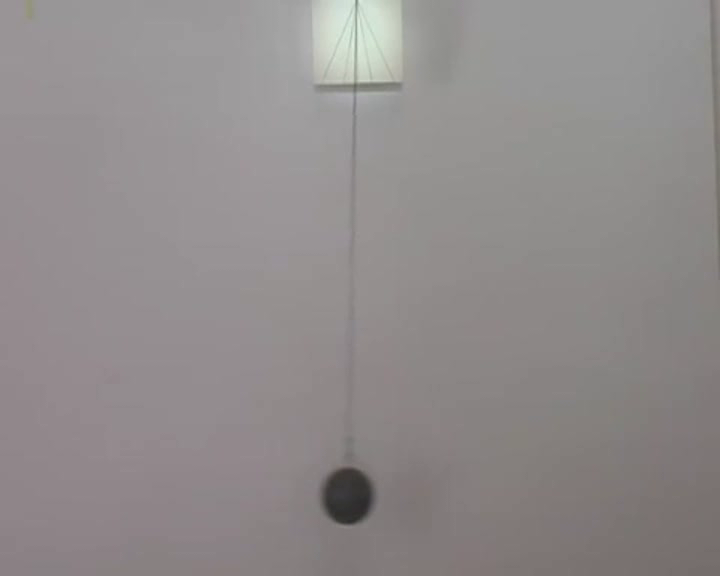}
    \end{minipage}
    \begin{minipage}{0.32\textwidth}
        \centering
        \subfiguretitle{b) Frame 13}
        \includegraphics[width=0.95\textwidth]{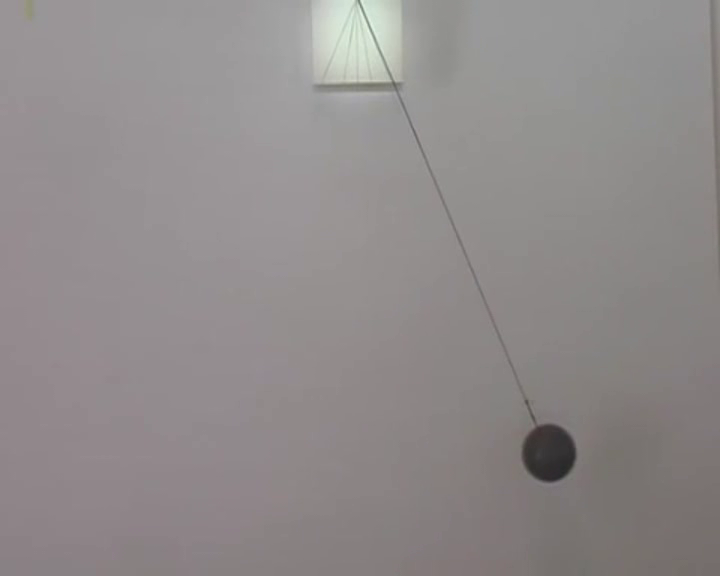}
    \end{minipage}
    \begin{minipage}{0.32\textwidth}
        \centering
        \subfiguretitle{c) Frame 36}
        \includegraphics[width=0.95\textwidth]{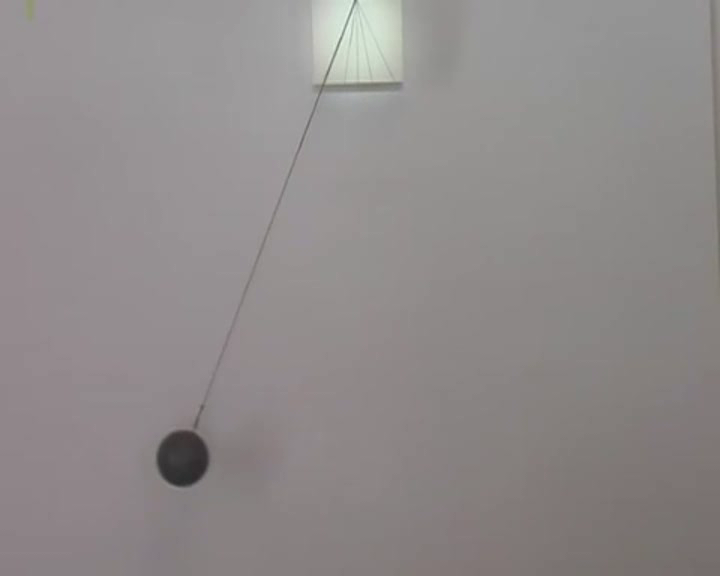}
    \end{minipage} \\[1em]
    \begin{minipage}{0.99\textwidth}
        \centering
        \subfiguretitle{d)}
        \includegraphics[width=\textwidth]{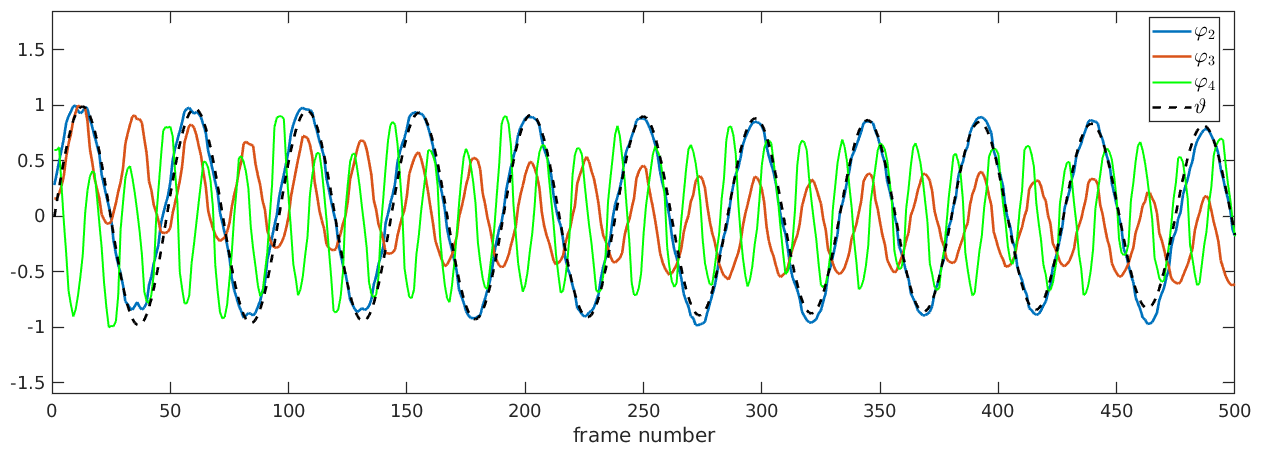}
    \end{minipage}\\[1em]
    \begin{minipage}{0.32\textwidth}
        \centering
        \subfiguretitle{e) $ \varphi_2 $}
        \includegraphics[width=0.95\textwidth]{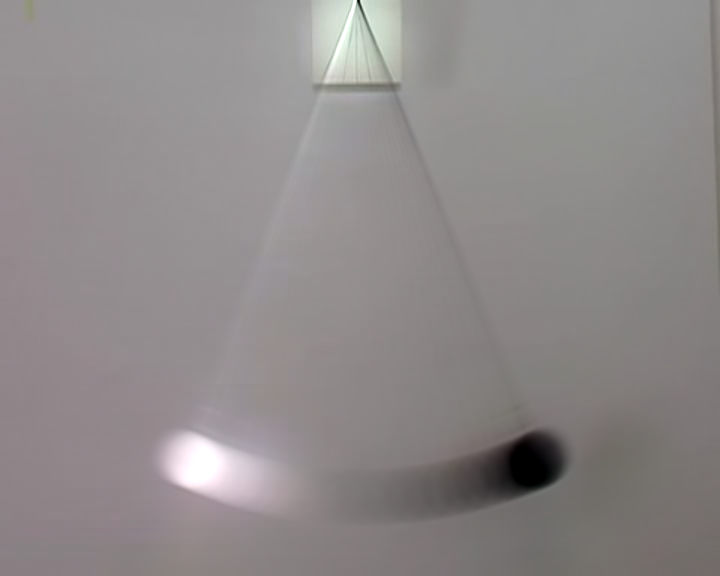}
    \end{minipage}
    \begin{minipage}{0.32\textwidth}
        \centering
        \subfiguretitle{f) $ \varphi_3 $}
        \includegraphics[width=0.95\textwidth]{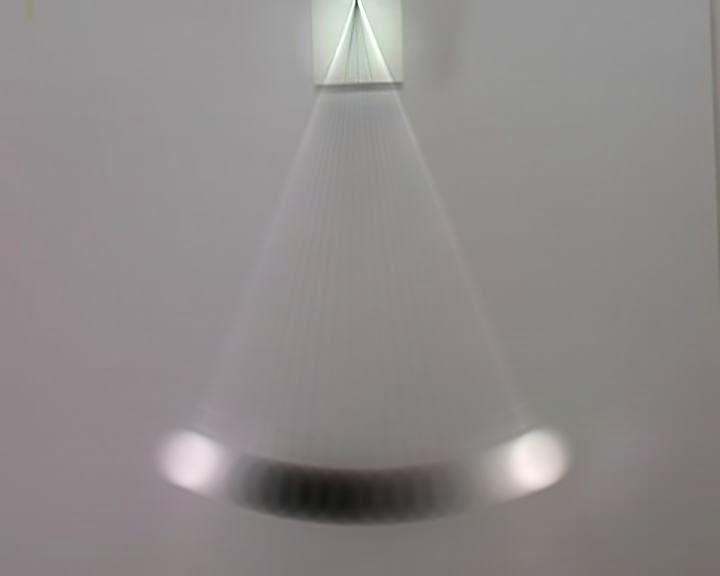}
    \end{minipage}
    \begin{minipage}{0.32\textwidth}
        \centering
        \subfiguretitle{g) $ \varphi_4 $}
        \includegraphics[width=0.95\textwidth]{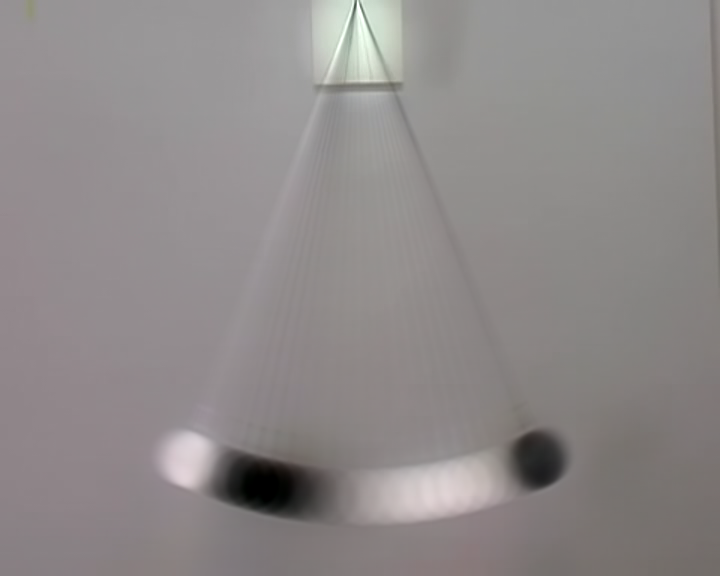}
    \end{minipage}
    \caption{a)~No angular displacement. b)~Maximum displacement right-hand side. c)~Maximum displacement left-hand side. d)~Values of the normalized eigenfunctions $ \varphi_2 $, $ \varphi_3 $, and $ \varphi_4 $ for each frame. The eigenfunctions encode the frequency of the pendulum. The frames 13 and 36 correspond to the first maximum and minimum of the eigenfunction $ \varphi_2 $. The period of $ \varphi_3 $ is twice the period of $ \varphi_2 $. The black dashed line shows the angular displacement $ \vartheta $ (rescaled for the sake of comparison) obtained by a numerical simulation of the pendulum. The dominant eigenfunction parametrizes the angular displacement. e--g) Minima of the leading eigenfunctions computed using gradient descent, see \cite{KPS18} for details. The video snapshots are reproduced with the kind permission of \emph{ScienceOnline}.}
    \label{fig:Pendulum}
\end{figure}

The values of the resulting nontrivial dominant eigenfunctions $ \varphi_2 $, $ \varphi_3 $, and $ \varphi_4 $ evaluated for each frame and the associated single snapshot data summarizations are shown in Figure~\ref{fig:Pendulum}. The first nontrivial eigenfunction encodes the frequency of the pendulum and the second eigenfunction twice the frequency. As a result, we could now sort the frames according to the angular displacement of the pendulum using the eigenfunctions. The information encoded in the eigenfunctions can be visualized using gradient-based optimization techniques (i.e., by finding states that minimize or maximize a given eigenfunction) as described in \cite{KPS18}. For this simple example, we used the raw video data. For more complex systems, preprocessing steps might be beneficial, e.g., mean subtraction, Sobel edge detection, or more sophisticated feature detection approaches such as SIFT or HOG \citep{BRF10}. In this way, it would be possible to track features of images over time.

\subsection{Text Data}

In this section, we show how the eigendecomposition of the kernel Perron--Frobenius operator with respect to the invariant density, denoted by $ \mathcal{T}_k $, can be used for non-vectorial data. Consider the following scenario: Given a collection of text documents, we first erase all words not contained in a predefined vocabulary. The vocabulary in this example, shown in Table~\ref{tab:Keywords}, was chosen based on keywords that appeared repeatedly in the text. Of the remaining words, one word (denoted by $ y_i $) following another (denoted by $ x_i $) is considered to be its time-evolved version or successor. This defines a discrete dynamical system. The lists of all such words $ x_i $ and $ y_i $ are denoted by $ \mathbf{X} $ and $ \mathbf{Y} $, respectively. We collect 1000 word pairs  from news articles. As an example, let us parse the following sentence:
\blockquote{ \small
Macron’s announcement Wednesday was the latest attempt by a government to find ways to handle the worldwide spread of disinformation on social media -- ``fake news'', as U.S. \textbf{President} Donald Trump calls it. His plan would allow judges to block a website or a user account, in particular during an \textbf{election}, and oblige \textbf{internet} platforms to publish the names of those behind sponsored contents.\!\footnote{\href{https://www.reuters.com/article/us-france-macron-fakenews/french-opposition-twitter-users-slam-macrons-anti-fake-news-plans-idUSKBN1EU161}{Reuters: French opposition, Twitter users slam Macron's anti-fake-news plans}.}
}
Here, we would first remove all words not contained in the vocabulary and thus obtain the word pairs (``president'', ``election'') and (``election'', ``internet''). Typically, the same word or related words are used several times within one article, but words related to other topics are rarely mentioned. Since we consider the sequence of articles as one long document\footnote{Parts of the same articles are used several times to increase the size of the data set, this is thus a synthetic example, mainly to illustrate the concept.}, transitions occur, for instance, when one article ends and the next one about a different topic starts, when different topics are mixed, or when words such as \emph{state} or \emph{cell} are used in a different context. These are the rare transitions that are similar to the jumps between the wells in the molecular dynamics example. Although this is a slightly artificial example, it illustrates how to extend transfer operator approaches to new domains where only a similarity measure given by a kernel is available.

\begin{table}[t!]
    \centering
    \caption{Predefined set of keywords.}
    \def\arraystretch{1.2}
    \begin{tabular}{lllll}
        \hline
        browser & cell & computer & damage & department \\
        disease & e-mail & election & hurricane & internet \\
        midterm & president & rain & science & state \\
        stem & storm & tablet & therapy & weather \\
        \hline
    \end{tabular}
    \label{tab:Keywords}
\end{table}

\begin{figure}[th] 
    \centering
    \includegraphics[width=0.53\textwidth]{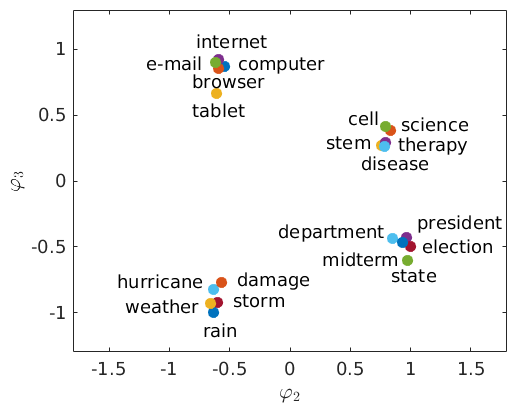}
    \caption{Clustering based on the dominant eigenfunctions. Our method identified four topic clusters: information technology, medicine, weather, and politics.}
    \label{fig:Keyword clustering}
\end{figure}

We evaluate the Gram matrices $ \gram[XY] $ and $ \gram[XX] $ and compute eigenfunctions of the operator~$ \mathcal{T}_k $. Here, $ [\gram[XY]]_{ij} = k(x_i, y_j) $, where $ x_i $ is the $ i $th word in $ \mathbf{X} $ and $ y_j $ the $j$th word in~$ \mathbf{Y} $. Correspondingly, $ \gram[XX] $ is the standard Gram matrix. Moreover, $ k = \exp\left(-\frac{1}{2 \sigma^2} k_s(\cdot, \cdot)^2\right) $, where $ k_s $ is the text kernel proposed in \citet{Lodhi02}\footnote{We use the \href{https://github.com/mmadry/string_kernel}{String Kernel Software} implementation.} and $ \sigma^2 = 0.2 $.
Given two words, the text kernel compares the substrings they contain, i.e., the higher the number of identical substrings, the higher the similarity. Substrings do not necessarily have to be contiguous, but matching substrings that are far apart are assigned a lower weight. We obtain, for instance, $ k_s(\text{``department''}, \text{``president''}) \approx 0.47 $ (relatively high similarity due to the same ending) and $ k_s(\text{``department''}, \text{``science''}) = 0.07 $ (almost no similarity). We compute again the leading nontrivial eigenfunctions $ \varphi_2 $ and $ \varphi_3 $ and use the the eigenfunctions as coordinates. The results are shown in Figure~\ref{fig:Keyword clustering}. Note that the words are not clustered based on string kernel similarity but on proximity in the document collection. Words that often occur together are grouped into clusters (given one word of such a cluster, the next word will most likely be in the same cluster). For this simple example, it would also have been possible to assign each word a distinct number and to generate a Markov state model by approximating the transition probabilities between words. The eigenvectors of the Markov matrix would then lead to a similar clustering. The text kernel, however, takes into account string similarity. This is important to account, for example, for grammatical variations reflected in word form (\emph{green} vs.\ \emph{greener}) and misspellings (\emph{love} vs.\ \emph{loove}) without necessarily resorting to lemmatizing, stemming, or other normalization techniques. Another possibility here would be to design linguistically informed string kernels. In German for example, a \emph{Visumantrag} (visa application) is more similar to \emph{Antrag} (application) than to \emph{Visum}. A string kernel taking this into account would instantly be reflected in the word clusters discovered by our method, which could never be achieved when using a pure Markov state model.

\section{Conclusion}
\label{sec:conclusion}

We extended transfer operator theory to reproducing kernel Hilbert spaces and illustrated similarities with the conditional mean embedding framework. While the conventional transfer operator propagates densities, the kernel transfer operator can be viewed as an operator that propagates embedded densities. Moreover, we have highlighted relationships between the covariance and cross-covariance operator based methods to obtain empirical estimates of the kernel transfer operators and other well-known methods, e.g., TICA and EDMD, for the approximation of transfer operators developed by the dynamical systems, molecular dynamics, and fluid dynamics communities. The eigendecompositions of kernel transfer operators provide a powerful tool for analyzing nonlinear dynamical systems. One main benefit of purely kernel-based methods is that these methods can be applied to non-vectorial data such as strings or graphs. We demonstrated the efficiency and versatility of these methods using guiding examples as well as simple molecular dynamics applications, video data, and text data. 

Our future work includes applying the proposed methods to more realistic data sets, in particular more complicated video data, potentially in combination with machine learning based preprocessing approaches. The main remaining theoretical question is the convergence of the kernel transfer operators to the actual transfer operators.
In \cite{KBSS18}, it was shown that, using the Mercer feature space representation, kernel transfer operators can be interpreted as Galerkin approximations of their analytical counterparts. Combining this with the EDMD convergence results obtained in \cite{KoMe18}, it might be possible to show also convergence for kernels with infinite-dimensional feature spaces. Furthermore, the influence of the kernel itself, the regularization parameter, and the number of test points on the accuracy of the eigenfunction approximations is not yet clear. Another extension of the framework presented within this paper would be to use singular value decompositions instead of eigenvalue decompositions. The resulting methods could then also be applied to problems where the spaces $ \inspace $ and $ \outspace $ are different.

\section*{Acknowledgements}

This research has been partially funded by Deutsche Forschungsgemeinschaft (DFG) through grant CRC 1114 \emph{``Scaling Cascades in Complex Systems''}. Krikamol Muandet acknowledges fundings from the Faculty of Science, Mahidol University and the Thailand Research Fund (TRF). We would like to thank the reviewers for their helpful comments.

\bibliographystyle{unsrtnat}
\bibliography{cme_to}

\end{document}